\newtheorem{theorem}{Theorem}[section]
\newtheorem{lemma}[theorem]{Lemma}
\newtheorem{prop}[theorem]{Proposition}
\newtheorem{cor}[theorem]{Corollary}
\theoremstyle{definition}
\newtheorem{definition}[theorem]{Definition}
\theoremstyle{remark}
\numberwithin{equation}{section}
\newcommand{\cA}{{\cal A}}
\newcommand{\cal }{\mathcal }
\newcommand{\forme}[1]{}
\newcommand{\cay}[2]{{\sf Cay}(#1,#2)}
\newcommand{\Z}{{\Bbb Z}}
\newcommand{\aut}[1]{{\sf Aut}(#1)}
\newcommand{\cT}{\mathcal T}
\newcommand{\und}[1]{\underline{#1}}
\newcommand{\laa}{\langle\!\langle}
\newcommand{\raa}{\rangle\!\rangle}
\newcommand{\F}{\Bbb F}
\newcommand{\GL}{{\sf GL}}
\begin{document}
\title[Generalised dihedral CI-groups]{Generalised dihedral CI-groups}

\author{Ted Dobson}
\address{University of Primorska, UP IAM, Muzejski trg 2, SI-6000 Koper, Slovenia and, University of Primorska, UP FAMNIT, Glagolja\c{s}ka 8, SI-6000 Koper, Slovenia}
\email{ted.dobson@upr.si}
\author{Mikhail Muzychuk}
\address{Department of Mathematics, Ben-Gurion University of the Negev, Israel.}
\email{muzychuk@bgu.ac.il}

\author{Pablo Spiga}
\address{Dipartimento di Matematica e Applicazioni, University of Milano-Bicocca, Via Cozzi 55, 20125 Milano, Italy}
\email{pablo.spiga@unimib.it}

\begin{abstract}
In this paper, we find a strong new restriction on the structure of CI-groups. We show that, if $R$ is a generalised dihedral group  and if $R$ is a CI-group, then for every odd prime $p$ the Sylow $p$-subgroup of $R$ has order $p$, or $9$.   Consequently, any CI-group with quotient a generalised dihedral group has the same restriction, that for every odd prime $p$ the Sylow $p$-subgroup of the group has order $p$, or $9$. We also give a counter example to the conjecture that every BCI-group is a CI-group.
\smallskip

\noindent\textbf{Keywords:} CI-group, DCI-group, generalised dihedral, Cayley isomorphism
\end{abstract}
\subjclass[2020]{Primary 05E18, Secondary 05E30}

\maketitle
\section{Introduction}
Let $R$ be a finite group and let $S$ be a subset of $R$.  The \textit{\textbf{Cayley  digraph}} of $R$ with connection set $S$, denoted $\cay{R}{S}$, is the digraph with vertex set $R$ and with $(x, y)$ being an arc if and only if $xy^{-1}\in S$. Now, $\cay{R}{S}$ is said to be a \textit{\textbf{Cayley  isomorphic}} digraph, or \textit{\textbf{DCI-graph}} for short, if whenever $\cay{R}{S}$ is isomorphic to $\cay{R}{T}$, there exists an automorphism $\varphi$ of $R$ with $S^\varphi =T$. Clearly, $\cay{R}{S}\cong \cay{R}{S^\varphi}$ for every $\varphi\in\mathrm{Aut}(R)$ and hence, loosely speaking, for a DCI-graph $\cay{R}{S}$ deciding when a Cayley digraph over $R$ is isomorphic to $\cay{R}{S}$ is theoretically and algorithmically elementary; that is, the solving set for $\cay{R}{S}$ is reduced to simply $\mathrm{Aut}(R)$ (for the definition of a solving set see for example~\cite{Muzychuk2004,Muzychuk1999}). The group $R$ is a \textit{\textbf{DCI-group}} if $\cay{R}{S}$ is a DCI-graph for every subset $S$ of $R$. Moreover, $R$ is a \textit{\textbf{CI-group}} if $\cay{R}{S}$ is a DCI-graph for every inverse-closed subset $S$ of $R$. Thus every DCI-group is a CI-group.

After roughly 50 years of intense research, the classification of DCI- and CI-groups is still open. The current state of the art in this problem is as follows. There exist two rather short lists of candidates for DCI- and CI-groups and it is known that every DCI- and every CI-group must be a member of the corresponding list, see for instance~\cite{Li2002}. Showing that a candidate on the lists of possible DCI- or CI-groups is actually a DCI- or CI-group, though, takes a considerable amount of effort. Just to give an example, the recent paper of Feng and Kov\'{a}cs~\cite{FengK2018} is a tour de force that shows that elementary abelian groups of rank $5$ are DCI-groups.

In this paper we find an unexpected new restriction concerning generalised dihedral groups, and significantly shorten the list of candidates for  CI-groups.
\begin{definition}\label{defeq:2}{\rm
Let $A$ be an abelian group. The \textit{\textbf{generalised dihedral}} group $\mathrm{Dih}(A)$ over $A$ is the group $\langle A, x\mid a^x=a^{-1},\forall a\in A\rangle$. A group is called generalised dihedral if it is isomorphic to some $\mathrm{Dih}(A)$. When $A$ is cyclic, $\mathrm{Dih}(A)$ is called a dihedral group.}
\end{definition}
Our main result is the following.
\begin{theorem}\label{thrm:main}
Let $\mathrm{Dih}(A)$ be a generalised dihedral group over the abelian group $A$. If $\mathrm{Dih}(A)$ is a $\mathrm{CI}$-group,
then, for every odd prime $p$ the Sylow $p$-subgroup of $A$ has order $p$, or $9$. If $\mathrm{Dih}(A)$ is a {\rm DCI}-group, then, in addition, the Sylow $3$-subgroup has order $3$.
\end{theorem}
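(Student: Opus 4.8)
The plan is to use the hereditary behaviour of the CI-property to reduce to a short list of generalised dihedral groups, and then to refute the CI-property for each by exhibiting two isomorphic Cayley graphs that are inequivalent under $\mathrm{Aut}(\mathrm{Dih}(A))$. It is standard that a subgroup of a CI-group (resp.\ DCI-group) is again a CI-group (resp.\ DCI-group): if $H=\langle S\rangle=\langle T\rangle\le G$ and $\cay{H}{S}\isom\cay{H}{T}$, then $\cay{G}{S}\isom\cay{G}{T}$, since each is a disjoint union of $|G:H|$ copies of the corresponding graph on $H$; and any $\varphi\in\mathrm{Aut}(G)$ with $S^\varphi=T$ satisfies $H^\varphi=\langle S\rangle^\varphi=\langle S^\varphi\rangle=\langle T\rangle=H$ and so restricts to an automorphism of $H$ carrying $S$ to $T$. (Every counterexample produced below will be connected, so requiring $\langle S\rangle=\langle T\rangle=H$ costs nothing.) Since every subgroup $B$ of the abelian group $A$ is normal in $\mathrm{Dih}(A)$ and $\mathrm{Dih}(B)=\langle B,x\rangle\le\mathrm{Dih}(A)$, if $\mathrm{Dih}(A)$ is a CI-group then so is $\mathrm{Dih}(B)$ for every $B\le A$, in particular for $B=P$ the Sylow $p$-subgroup of $A$. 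As every abelian $p$-group of order at least $p^2$ contains a copy of $\Z_{p^2}$ or of $\Z_p\times\Z_p$, and every abelian $3$-group of order at least $27$ contains a copy of $\Z_{27}$, $\Z_9\times\Z_3$ or $\Z_3\times\Z_3\times\Z_3$, it suffices to show that $\mathrm{Dih}(\Z_{p^2})$ and $\mathrm{Dih}(\Z_p\times\Z_p)$ are not CI-groups for $p\ge5$, that $\mathrm{Dih}(\Z_{27})$, $\mathrm{Dih}(\Z_9\times\Z_3)$ and $\mathrm{Dih}(\Z_3\times\Z_3\times\Z_3)$ are not CI-groups, and that $\mathrm{Dih}(\Z_9)$ and $\mathrm{Dih}(\Z_3\times\Z_3)$ are not DCI-groups. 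The cyclic cases $\mathrm{Dih}(\Z_{p^2})$ ($p\ge5$) and $\mathrm{Dih}(\Z_{27})$ follow at once from Muzychuk's classification of cyclic CI-groups, which tells us that $\Z_{p^2}$ ($p\ge5$) and $\Z_{27}$ are not CI-groups; in the remaining cases the abelian part is itself a CI-group, so a genuinely new obstruction, internal to the dihedral group, is required.

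For the main construction, let $A$ be one of $\Z_p\times\Z_p$ ($p\ge5$), $\Z_9\times\Z_3$ or $\Z_3\times\Z_3\times\Z_3$, and pick $N\le A$ of order $p$ (of order $3$ in the last two cases), so that $N\trianglelefteq\mathrm{Dih}(A)$ and $\mathrm{Dih}(A)/N\isom\mathrm{Dih}(A/N)$. I would take a connection set $S$ of $\mathrm{Dih}(A)$ that is a union of cosets of $N$, making $\cay{\mathrm{Dih}(A)}{S}$ a wreath product $Y\wr\overline{K_{|N|}}$, where $Y$ is a Cayley graph of $\mathrm{Dih}(A/N)$ chosen so that $\mathrm{Aut}(Y)$ is as small as possible; then $\mathrm{Aut}\bigl(\cay{\mathrm{Dih}(A)}{S}\bigr)=\mathrm{Sym}(|N|)\wr\mathrm{Aut}(Y)$. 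The goal is to show that this group contains two regular subgroups isomorphic to $\mathrm{Dih}(A)$ that are not conjugate in it, equivalently two inverse-closed connection sets $S,S'$ with $\cay{\mathrm{Dih}(A)}{S}\isom\cay{\mathrm{Dih}(A)}{S'}$ but $S'\ne S^\varphi$ for every $\varphi\in\mathrm{Aut}(\mathrm{Dih}(A))=\mathrm{Hol}(A)$. Any such regular subgroup must project onto a regular copy of $\mathrm{Dih}(A/N)$ in the base, with fibre-kernel a copy of $\Z_{|N|}$ acting regularly on every fibre, and the heart of the matter is that the way these two pieces are glued can be twisted inside $\mathrm{Sym}(|N|)\wr\mathrm{Aut}(Y)$ in strictly more ways than $\mathrm{Hol}(A)$ can realise as soon as $|A|>9$ (for $|A|=27$ one may need to iterate, for instance using $\Z_3\times\Z_3\trianglelefteq\Z_3\times\Z_3\times\Z_3$ with fibres of size $9$); when $|A|=9$ the corresponding wreath product is too small and all regular dihedral subgroups are conjugate, in accordance with $\mathrm{Dih}(\Z_9)$ and $\mathrm{Dih}(\Z_3\times\Z_3)$ being CI-groups. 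Carrying this out — determining the automorphism group of the wreath product, classifying its regular $\mathrm{Dih}(A)$-subgroups, and proving that they form at least two conjugacy classes (most naturally via a Schur-ring computation over $\mathrm{Dih}(A)$) — is the step I expect to be the main obstacle.

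It remains to show that $\mathrm{Dih}(\Z_9)$ and $\mathrm{Dih}(\Z_3\times\Z_3)$ are not DCI-groups. Here every reflection of $\mathrm{Dih}(A)$ is an involution, so the reflection part of any connection set is automatically inverse-closed; the additional freedom offered by a non-inverse-closed connection set lies entirely in $S\cap A$, and $\cay{\mathrm{Dih}(A)}{S}$ is then the disjoint union of two copies of the Cayley digraph $\cay{A}{S\cap A}$, joined across the (undirected) reflection part. Running a directed analogue of the wreath/twisting argument of the previous paragraph over the normal subgroup $\Z_3\le A$, now with a directed fibre structure, I would obtain a Cayley digraph on $\mathrm{Dih}(\Z_9)$ (and one on $\mathrm{Dih}(\Z_3\times\Z_3)$) that is not a DCI-graph; this is a separate, and technically lighter, construction.
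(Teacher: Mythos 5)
Your reduction skeleton is sound and is essentially the same first move as the paper's: subgroups of (D)CI-groups are (D)CI-groups, so it suffices to kill a short list of minimal generalised dihedral groups, and the cyclic cases $\mathrm{Dih}(\Z_{p^2})$ ($p\ge 5$) and $\mathrm{Dih}(\Z_{27})$ come for free from the classification of cyclic CI-groups (the paper gets the same effect by quoting the candidate list in~\cite{Li2002}, which also disposes of $\Z_9\times\Z_3$ without any new construction). But everything that makes the theorem a theorem lies in the cases you defer: $\mathrm{Dih}(\Z_p\times\Z_p)$ for $p\ge 5$ (CI), $\mathrm{Dih}(\Z_3^3)$ (CI), and $\mathrm{Dih}(\Z_3\times\Z_3)$ (DCI). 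For these you give no connection sets, no computation of an automorphism group, and no proof of non-conjugacy of two regular copies; you yourself flag this as ``the main obstacle.'' That is a genuine gap, not a finishing detail: the paper spends Sections~2--5 on exactly this step, building $G\le \mathrm{SL}_3(\F)$ of order $4q^3$ with two regular generalised dihedral subgroups $H$ and $K$ that are both \emph{normal} in $G$ (hence non-conjugate), proving $G$ is $2$-closed, and then producing a single connection set $T=P_0\cup P_1\cup P_x\cup C_1\cup C_{-1}$ whose generated Schur ring is the whole transitivity module, so that $\mathrm{Aut}(\cay{H}{T})=G$ and Babai's criterion applies; $T$ is inverse-closed for $q\ge 7$, with ad hoc sets for $q=3,5$ and a separate explicit example over $\Z_3^3$.

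Beyond being incomplete, your proposed mechanism is doubtful. If $S$ is a union of cosets of $N\le A$ with $|N|=p$, then $\cay{\mathrm{Dih}(A)}{S}$ is a lexicographic product $Y[\overline{K_p}]$ and its automorphism group is (generically) the full imprimitive wreath product $\mathrm{Aut}(Y)\wr\mathrm{Sym}(p)$. The enormous fibre group $\mathrm{Sym}(p)^{2p}$ is precisely what tends to fuse ``twisted'' regular copies of $\mathrm{Dih}(A)$ into a single conjugacy class: the fibre kernel of any regular copy is a semiregular $\Z_p$ inside the base group, which can be conjugated to a standard diagonal, after which the complement can typically be adjusted as well. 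Your central assertion --- that the gluing can be twisted in strictly more ways than $\mathrm{Hol}(A)$ realises once $|A|>9$ --- is exactly what would have to be proved, and nothing in the proposal supports it; note that the paper's counterexample is structurally the opposite of a wreath product (automorphism group of order only $4q^3$, connection set not a union of cosets, non-conjugacy coming from normality of $H$ and $K$ rather than from fibre twisting). Two smaller points: $\mathrm{Dih}(\Z_9)$ not being a DCI-group needs no construction at all, since $\Z_9$ is not a DCI-group and the subgroup argument applies; and $\mathrm{Dih}(\Z_9)$, $\mathrm{Dih}(\Z_3\times\Z_3)$ are not known to be CI-groups --- the theorem merely fails to exclude them --- so your parenthetical ``in accordance with'' overstates what is known.
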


Generalised dihedral groups are amongst the most abundant members in the list of putative CI-groups. The importance of Theorem~\ref{thrm:main} is the arithmetical condition on the order of such groups, which greatly reduces even further the list of
candidates for CI-groups. We believe that every generalised dihedral group satisfying this numerical condition on its order is a genuine CI-group. (This is in line with the partial result in~\cite{DobsonMS2015}.)  Additionally, this result further reduces to two other groups on the list, whose definitions we now give.

\begin{definition}
Let $A$ be an abelian group such that every Sylow $p$-subgroup of $A$ is elementary abelian.  Let $n\in\{2,4,8\}$ be relatively prime to $\vert A\vert$.  Set $E(A,n) = A\rtimes\Z_n$, where $a^{g} = a^{-1}$, $\forall a\in A$.
\end{definition}

Note that $E(A,2) = \mathrm{Dih}(A)$.  The groups $E(A,4)$ and $E(A,8)$ have centres $Z_1$ and $Z_2$ of order $2$ and $4$, respectively, and $E(A,4)/Z_1 \cong E(A,8)/Z_2\cong\mathrm{Dih}(A)$.  Babai and Frankl \cite[Lemma 3.5]{BabaiF1978} showed that a quotient of a (D)CI-group by a characteristic subgroup is a (D)CI-group, while the first author and Joy Morris \cite[Theorem 8]{DobsonM2015a} showed that a quotient of a (D)CI-group is a (D)CI-group.  Applying either result we have the following.

\begin{cor}
If $E(A,4)$ or $E(A,8)$ is a $\mathrm{CI}$-group, then, for every odd prime $p$ the Sylow $p$-subgroup of $A$ has order $p$ or $9$.
If $E(A,n),n\in\{2,4,8\}$ is a {\rm DCI}-group, then, in addition, $n\neq 8$ and the Sylow $3$-subgroup of $A$ has order $3$.
\end{cor}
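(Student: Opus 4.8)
The plan is to deduce the corollary from Theorem~\ref{thrm:main} by exhibiting $\mathrm{Dih}(A)$ as a quotient of $E(A,n)$ and invoking the fact that quotients of (D)CI-groups are (D)CI-groups; a single additional classical fact is needed to handle the case $n=8$ in the DCI-statement.

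First I would dispose of the CI-statement. If $n=2$ there is nothing to prove, since $E(A,2)=\mathrm{Dih}(A)$ and the claim is precisely Theorem~\ref{thrm:main}. If $n\in\{4,8\}$, I would use the facts recorded just before the corollary: the centre $Z_1$ of $E(A,4)$ has order $2$, the centre $Z_2$ of $E(A,8)$ has order $4$, and $E(A,4)/Z_1\cong E(A,8)/Z_2\cong\mathrm{Dih}(A)$. As the centre of a group is characteristic, \cite[Lemma 3.5]{BabaiF1978} shows that $\mathrm{Dih}(A)$ is a CI-group (one may instead appeal to \cite[Theorem 8]{DobsonM2015a}, which drops the characteristic hypothesis). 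Applying Theorem~\ref{thrm:main} to $\mathrm{Dih}(A)$ then gives that, for every odd prime $p$, the Sylow $p$-subgroup of $A$ has order $p$ or $9$.

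Next I would treat the DCI-statement, where I additionally have to rule out $n=8$ and to sharpen the conclusion at the prime $3$. To see that $n=8$ is impossible, note that, since $n$ is coprime to $\abs{A}$, the normal subgroup $A$ of $E(A,8)$ is a Hall $2'$-subgroup, hence the unique subgroup of its order and therefore characteristic, with $E(A,8)/A\cong\Z_8$. If $E(A,8)$ were a DCI-group, then \cite[Lemma 3.5]{BabaiF1978} would force $\Z_8$ to be a DCI-group; but $\Z_8$, although a CI-group, is not a DCI-group (see, e.g., \cite{Li2002}). Hence $n\in\{2,4\}$, and arguing exactly as in the CI-case — working with $\mathrm{Dih}(A)=E(A,2)$ directly when $n=2$, and with the characteristic centre $Z_1$ when $n=4$ — the quotient $\mathrm{Dih}(A)$ is a DCI-group, so the DCI-part of Theorem~\ref{thrm:main} yields that the Sylow $3$-subgroup of $A$ has order $3$.

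I do not anticipate any real obstacle here: granted Theorem~\ref{thrm:main}, the corollary is a short sequence of standard reductions. The only ingredient external to this paper is the classical fact that $\Z_8$ is not a DCI-group. The one step that calls for a moment's care is verifying that the subgroups to which \cite[Lemma 3.5]{BabaiF1978} is applied — the centres $Z_1$, $Z_2$, and the Hall subgroup $A$ — are characteristic; this is immediate, and can in any case be avoided by using \cite[Theorem 8]{DobsonM2015a} throughout, since that result applies to arbitrary quotients.
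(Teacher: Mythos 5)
Your proposal is correct and follows essentially the same route as the paper: quotient $E(A,4)$ and $E(A,8)$ by their (characteristic) centres to obtain $\mathrm{Dih}(A)$, invoke the Babai--Frankl or Dobson--Morris quotient results, and apply Theorem~\ref{thrm:main}. Your additional argument for the claim $n\neq 8$ in the DCI case --- passing to the characteristic Hall subgroup $A$ so that $E(A,8)/A\cong\Z_8$, which is classically known not to be a DCI-group (equivalently, one could use that $\Z_8\le E(A,8)$ and subgroups of DCI-groups are DCI) --- correctly supplies a step the paper leaves implicit, only remarking later that $E(M,8)$ is never a DCI-group.
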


Not much is known about which of the groups under consideration in this paper are CI-groups.  Let $p$ be a prime.  Babai \cite[Theorem 4.4]{Babai1977} showed $D_{2p}$ is a CI-group.  The first author \cite[Theorem 22]{Dobson2002} extended this to some special values of square-free integers.  With Joy Morris, the first and third authors \cite{DobsonMS2015} showed that $D_{6p}$ is a CI-group, $p\ge 5$.  Also, Li, Lu, and P\'alfy showed $E(p,4)$ and $E(p,8)$ are CI-groups.

We have one other result of interest, for which we will need an additional definition.

\begin{definition}
Let $G$ be a group, and $S\subseteq G$.  A {\bf Haar graph} of $G$ with connection set $S$ has vertex set $G\times\Z_2$ and edge set $\{\{(g,0),(sg,1)\}:g\in G{\rm\ and\ }s\in S\}$.
\end{definition}

\noindent So a Haar graph is a bipartite analogue of a Cayley graph.  There is a corresponding isomorphism problem for Haar graphs, and if the group $A$ is abelian, it is equivalent to the isomorphism problem for Cayley graphs of generalised dihedral groups ${\rm Dih}(A)$ that are bipartite (for nonabelian groups the problems are not equivalent, as for non-abelian groups Haar graphs need not be transitive), see \cite[Lemma 2.2]{KoikeK2019}.  If isomorphic bipartite Cayley graphs of ${\rm Dih}(A)$ are isomorphic by group automorphisms of $A$, we say $A$ is a {\bf BCI-group}.  It has been conjectured \cite{ArezoomandT2015} that every BCI-group is a CI-group.  We will also give a counter example to this conjecture by showing $\Z_3^k$ is not a BCI-group for every $k\ge 3$, while it is known that $\Z_3^k$ is a CI-group for every $1\le k\le 5$ \cite{Spiga2009}.

\subsection{Some notation}
 Babai~\cite[Lemma 3.1]{Babai1977} has proved a very useful criterion for determining when a finite group is a DCI-group and, more generally, when $\cay{R}{S}$ is a DCI-graph.
\begin{lemma}\label{2}
Let $R$ be a finite group, and let $S$ be a subset of $R$. Then, $\cay{R}{S}$ is a $\mathrm{DCI}$-graph if and only if $\mathrm{Aut}(\cay{R}{S})$ contains a unique conjugacy class of regular subgroups isomorphic to $R$.
\end{lemma}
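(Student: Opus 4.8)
The plan is to prove both implications by passing between regular subgroups of $A:=\mathrm{Aut}(\cay{R}{S})$ and Cayley-digraph representations of the graph, carefully tracking which conjugating permutations land in $A$. Throughout I write $R_R\le\mathrm{Sym}(R)$ for the right regular representation of $R$ acting on the vertex set $R$; since $(xg)(yg)^{-1}=xy^{-1}$, right translations preserve every Cayley digraph of $R$, so $R_R\le A$ and $R_R$ is regular and isomorphic to $R$. I would first record two standard facts: (a) the normaliser of $R_R$ in $\mathrm{Sym}(R)$ is the holomorph $\mathrm{Hol}(R)=R_R\rtimes\mathrm{Aut}(R)$, with $\mathrm{Aut}(R)$ the stabiliser of the identity vertex, so every permutation of $R$ normalising $R_R$ equals $\varphi\rho$ for some $\varphi\in\mathrm{Aut}(R)$ and $\rho\in R_R$; and (b) for $\varphi\in\mathrm{Aut}(R)$, the permutation $\varphi$ of $R$ is an isomorphism $\cay{R}{S}\to\cay{R}{S^\varphi}$ that normalises $R_R$ (since $\varphi\rho_r\varphi^{-1}=\rho_{\varphi(r)}$). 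I will also use that the connection set is recoverable from a Cayley digraph of $R$ as the in-neighbourhood of the identity vertex, so $\cay{R}{T_1}=\cay{R}{T_2}$ as digraphs on $R$ forces $T_1=T_2$.

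For the implication ``unique conjugacy class $\Rightarrow$ $\mathrm{DCI}$'', I would take an isomorphism $\psi\colon\cay{R}{S}\to\cay{R}{T}$. Since $R_R\le\mathrm{Aut}(\cay{R}{T})$ and $\psi^{-1}\mathrm{Aut}(\cay{R}{T})\psi=A$, the conjugate $\psi^{-1}R_R\psi$ is a regular subgroup of $A$ isomorphic to $R$, hence by hypothesis $A$-conjugate to $R_R$: there is $\alpha\in A$ with $\alpha^{-1}\psi^{-1}R_R\psi\alpha=R_R$. Then $\psi\alpha$ normalises $R_R$, so by (a) it equals $\varphi\rho$ with $\varphi\in\mathrm{Aut}(R)$, $\rho\in R_R$. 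Since $\alpha\in A$ and $\rho\in R_R\le A$, the permutation $\psi\alpha$ carries $\cay{R}{S}$ to $\cay{R}{T}$, while by (b) the equal permutation $\varphi\rho$ carries $\cay{R}{S}$ to $\cay{R}{S^\varphi}$; hence $\cay{R}{T}=\cay{R}{S^\varphi}$, so $T=S^\varphi$ and $\cay{R}{S}$ is a $\mathrm{DCI}$-graph.

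For the converse I would assume $\cay{R}{S}$ is a $\mathrm{DCI}$-graph and take a regular subgroup $H\le A$ with $H\cong R$. As $H$ and $R_R$ are isomorphic regular subgroups of $\mathrm{Sym}(R)$, a standard transport-of-structure argument supplies $\sigma\in\mathrm{Sym}(R)$ with $\sigma^{-1}H\sigma=R_R$. Since $\cay{R}{S}$ is $H$-invariant, $\sigma^{-1}(\cay{R}{S})$ is invariant under $\sigma^{-1}H\sigma=R_R$, hence is a Cayley digraph $\cay{R}{T}$ of $R$ for some $T\subseteq R$; that is, $\sigma\colon\cay{R}{T}\to\cay{R}{S}$ is an isomorphism. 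The $\mathrm{DCI}$ hypothesis then yields $\varphi\in\mathrm{Aut}(R)$ with $S^\varphi=T$, and by (b) $\varphi$ is an isomorphism $\cay{R}{S}\to\cay{R}{T}$ normalising $R_R$. Hence $\alpha:=\sigma\varphi$, the composite of $\varphi\colon\cay{R}{S}\to\cay{R}{T}$ followed by $\sigma\colon\cay{R}{T}\to\cay{R}{S}$, lies in $A$, and $\alpha R_R\alpha^{-1}=\sigma(\varphi R_R\varphi^{-1})\sigma^{-1}=\sigma R_R\sigma^{-1}=H$. So every regular subgroup of $A$ isomorphic to $R$ is $A$-conjugate to $R_R$, and they form a single conjugacy class, as required.

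The main obstacle will be the step in the converse where the arbitrary conjugating permutation $\sigma$, which need not preserve $\cay{R}{S}$, is upgraded to $\sigma\varphi\in A$: this is exactly where the $\mathrm{DCI}$ hypothesis does its work, and it succeeds only because the correcting group automorphism $\varphi$ normalises $R_R$ and hence does not disturb the conjugation $\sigma R_R\sigma^{-1}=H$. The remaining ingredients — the identification of regular subgroups of $A$ with Cayley representations of $\cay{R}{S}$, and the holomorph description $N_{\mathrm{Sym}(R)}(R_R)=\mathrm{Hol}(R)$ — are classical, and the only care needed is in keeping the left/right-translation conventions consistent throughout.
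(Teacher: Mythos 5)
Your proof is correct: both directions are sound, the holomorph description of $N_{\mathrm{Sym}(R)}(R_R)=R_R\rtimes\mathrm{Aut}(R)$ and the recovery of the connection set as the in-neighbourhood of the identity are used correctly, and in the converse the composite $\sigma\varphi$ does lie in $\mathrm{Aut}(\cay{R}{S})$ and conjugates $R_R$ to $H$ because $\varphi$ normalises $R_R$. The paper gives no proof of this statement (it cites Babai's Lemma 3.1), and your argument is essentially Babai's classical one, so it matches the intended proof.
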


Let $\Omega$ be a finite set and let $G$ be a permutation group on $\Omega$. An \textit{\textbf{orbital graph}} of $G$  is a digraph with vertex set $\Omega$ and with arc set a $G$-orbit $(\alpha,\beta)^G=\{(\alpha^g,\beta^g)\mid g\in G\}$, where $(\alpha,\beta)\in \Omega\times\Omega$. In particular, each orbital graph has for its arcs one orbit on the ordered pairs of elements of $\Omega$, under the action of $G$. Moreover, we say that the orbital graphs  $(\alpha,\beta)^G$ and $(\beta,\alpha)^G$ are \textit{\textbf{paired}}. When $(\alpha,\beta)^G=(\beta,\alpha)^G$, we say that the orbital graph is \textit{\textbf{self-paired}}.

When $G$ is transitive and $\omega_0\in \Omega$, there exists a natural one-to-one correspondence between the orbits of $G$ on $\Omega\times \Omega$ (a.k.a. orbitals or 2-orbits of $G$) and the orbits of the stabiliser $G_{\omega_0}$ on $\Omega$ (a.k.a. \textit{\textbf{suborbits}} of $G$). Therefore, under this correspondence, we may naturally define paired and self-paired suborbits.

Two subgroups of the symmetric group $\mathsf{Sym}(\Omega)$ are called 2-\textit{\textbf{equivalent}} if they have the same orbitals. A subgroup of $\mathsf{Sym}(\Omega)$ generated by all subgroups 2-equivalent to a given $G\leq\mathsf{Sym}(\Omega)$ is
called the $2$--\textit{\textbf{closure}} of $G$, denoted $G^{(2)}$.

% The $2$-\textit{\textbf{closure}} of $G$, denoted $G^{(2)}$,
%It is the set
%$$\{\pi\in \mathrm{Sym}(\Omega)\mid
%\forall (\omega, \omega')\in \Omega\times \Omega, \textrm{there exists }g_{\omega\omega'}\in G \textrm{ with } (\omega, \omega')^\pi=(\omega, \omega')^{g_{\omega\omega'}}\},$$ where $\mathrm{Sym}(\Omega)$ is the symmetric group on $\Omega$. Observe that in the definition of $G^{(2)}$, the element $g_{\omega\omega'}$ of $G$ may depend upon the ordered pair $(\omega, \omega')$.
The group $G$ is said to be $2$-\textit{\textbf{closed}} if $G=G^{(2)}$. It is easy to verify that $G^{(2)}$ is a subgroup of $\mathrm{Sym}(\Omega)$ containing $G$ and, in fact, $G^{(2)}$ is the largest (with respect to inclusion) subgroup of $\mathrm{Sym}(\Omega)$ preserving every orbital of $G$.

\section{Construction and basic results}\label{sec:2}
Let $q$ be a power of an  odd prime and let $\F$ be a field of cardinality $q$. We let
\begin{align*}
G&:=\left\{
\begin{pmatrix}
a&x&z\\
0&b&y\\
0&0&c
\end{pmatrix}
\mid x,y,z\in \F, a,b,c\in \{-1,1\}, abc=1
\right\},\\
%P&:=\left\{
%\begin{pmatrix}
%1&x&z\\
%0&1&y\\
%0&0&1
%\end{pmatrix}
%\mid x,y,z\in \F
%\right\},\\
D&:=\left\{
\begin{pmatrix}
a& ax& ax^2/2\\
0&1&x\\
0&0&a
\end{pmatrix}
\mid x\in\F, a \in\{ -1,1\}\right\},\\
H&:=
\left\{
\begin{pmatrix}
a& 0& x\\
0& a& y\\
0&0&1
\end{pmatrix}
\mid x,y\in \F, a \in\{ -1,1\}\right\},\\
K&:=
\left\{
\begin{pmatrix}
1 &x& y\\
0 &a& 0\\
0&0&a
\end{pmatrix}\mid x,y\in \F, a \in\{ -1,1\}\right\}.
\end{align*}
It is elementary to verify that $G$, $D$, $H$ and $K$ are subgroups of the special linear group $\mathrm{SL}_3(\F)$. Moreover, $D$, $H$ and $K$ are subgroups of $G$, $|G|=4q^3$, $|D|=2q$ and $|H|=|K|=2q^2$. We summarise in Proposition~\ref{260220a} some more facts.

\begin{prop}\label{260220a} The group $D$ is generalised dihedral over the abelian group $(\F,+)$ and, $H$ and $K$ are generalised dihedral over the abelian group  $(\F\oplus\F,+)$. The core of $D$ in $G$ is $1$. Moreover, $$DK=DH=G=HD=KD \hbox{ and }D\cap H=1=D\cap K.$$
\end{prop}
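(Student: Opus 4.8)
The plan is to dispatch the five assertions in turn; all but the triviality of the core of $D$ in $G$ reduce to direct $3\times 3$ matrix arithmetic over $\F$, and even the core needs only a short extra argument. For the generalised dihedral structure of $D$, let $A_D$ be the set of its elements with $a=1$, namely the unipotent matrices $u(x)$ having off-diagonal entries $x$ in positions $(1,2)$ and $(2,3)$ and entry $x^{2}/2$ in position $(1,3)$, for $x\in\F$. Multiplying two such matrices shows $u(x)u(y)=u(x+y)$, so $A_D$ is a subgroup isomorphic to $(\F,+)$ and, since $\abs{D}=2q$, of index $2$ in $D$. The diagonal matrix $t$ with diagonal $(-1,1,-1)$ lies in $D$ (put $a=-1$, $x=0$), is an involution, and a one-line computation gives $t\,u(x)\,t^{-1}=u(-x)=u(x)^{-1}$; since the elements of $D$ with $a=-1$ are exactly the matrices $t\,u(x)$, we get $D=\langle A_D,t\rangle\cong\mathrm{Dih}((\F,+))$. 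The arguments for $H$ and $K$ are entirely analogous: in each case the matrices with $a=1$ form an index-$2$ abelian subgroup patently isomorphic to $(\F\oplus\F,+)$, inverted by the relevant diagonal involution (diagonal $(-1,-1,1)$ for $H$, diagonal $(1,-1,-1)$ for $K$), so $H\cong K\cong\mathrm{Dih}((\F\oplus\F,+))$.

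For the intersections, comparing $(3,3)$-entries forces any element of $D\cap H$ to have $a=1$, hence to equal some $u(x)$, and then comparing $(1,2)$-entries with the shape of an element of $H$ forces $x=0$; thus $D\cap H=1$, and the same bookkeeping with the $(1,1)$- and $(2,3)$-entries gives $D\cap K=1$. The order formula now yields $\abs{DH}=\abs{D}\,\abs{H}/\abs{D\cap H}=2q\cdot 2q^{2}=4q^{3}=\abs{G}$, and likewise $\abs{HD}=\abs{DK}=\abs{KD}=\abs{G}$; since each of these sets is contained in $G$, each equals $G$.

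It remains to prove the core $N$ of $D$ in $G$ is trivial. Let $U$ be the subgroup of matrices in $G$ with $a=b=c=1$; it is the kernel of the homomorphism $G\to\Z_2^2$ recording the diagonal, so $U\trianglelefteq G$, $\abs{U}=q^{3}$, and $D\cap U=A_D$, of index $2$ in $D$. Conjugating $u(x)$ by the element of $U$ whose only nonzero off-diagonal entry sits in position $(1,2)$ produces a unipotent matrix whose $(1,3)$-entry no longer equals the value $x^{2}/2$ prescribed by membership in $A_D$ once $x\neq 0$; hence $A_D$ contains no nontrivial subgroup normalised by $G$. Consequently $N\cap U$, being a $G$-invariant subgroup of $A_D$, is trivial, so $\abs{N}$ divides $\abs{D:D\cap U}=2$; and since a direct check using $\mathrm{char}(\F)\neq 2$ gives $Z(G)=1$, the group $G$ has no normal subgroup of order $2$, whence $N=1$. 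The only genuinely non-clerical step is this last one, and even it is short: one needs only that a single unipotent conjugation moves every nontrivial element of $A_D$ off $A_D$, and that the centre of $G$ is trivial.
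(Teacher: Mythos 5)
Your proposal is correct, and for most of the statement it runs along the same lines as the paper: the generalised dihedral structure of $D$, $H$, $K$ and the intersections $D\cap H=1=D\cap K$ are direct matrix checks (which the paper leaves implicit), and the product decompositions follow from $|DH|=|D||H|/|D\cap H|=2q\cdot 2q^2=4q^3=|G|$ exactly as in the paper. The one place where you take a genuinely different route is core-freeness of $D$. The paper conjugates $D$ by the single element $g=\mathrm{diag}(1,-1,-1)\in G$, computes $D\cap D^g=\langle\mathrm{diag}(-1,1,-1)\rangle$, so the core sits inside this order-$2$ subgroup, and then observes it must be trivial. You instead pass to the normal unitriangular subgroup $U\trianglelefteq G$ of order $q^3$: conjugating $u(x)$ by an elementary unipotent with nonzero $(1,2)$-entry $s$ preserves the $(1,2)$- and $(2,3)$-entries but shifts the $(1,3)$-entry by $sx$, so $A_D=D\cap U$ contains no nontrivial $G$-invariant subgroup; hence $\mathrm{core}_G(D)\cap U=1$, the core has order dividing $|D:D\cap U|=2$, and since a normal subgroup of order $2$ would be central while $Z(G)=1$, the core is trivial. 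Both arguments reduce to excluding an order-$2$ normal subgroup inside $D$; the paper's single-conjugate computation is shorter, whereas yours isolates the structural reason (no $G$-invariant subgroup of the ``parabolic'' subgroup $A_D$ of the Heisenberg group $U$, plus triviality of the centre) at the cost of the extra, though routine, verification that $Z(G)=1$. All the individual computations you invoke check out, so the argument is complete.
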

\begin{proof}
The first two assertions follow with easy matrix computations.
Let $$g:=\begin{pmatrix}1&0&0\\0&-1&0\\0&0&-1\end{pmatrix}\in G$$ and observe that
$$g^{-1}\begin{pmatrix}
a& ax& ax^2/2\\
0&1&x\\
0&0&a
\end{pmatrix}g=\begin{pmatrix}a& -ax& -ax^2/2\\0&1&x\\0&0&a\end{pmatrix}.$$As the characteristic of $\mathbb{F}$ is odd, from this it follows that
$$D\cap D^g=\left\langle\begin{pmatrix}-1&0&0\\0&1&0\\0&0&-1\end{pmatrix}\right\rangle.$$
It is now easy to see that $D$ is core-free in $G$.

It is readily seen from the definitions that $D\cap H=1=D\cap K$. Therefore, $|DH|=|D||H|=4q^3$ and $|DK|=|D||K|=4q^3$. As $DH$ and $DK$  are  subsets of $G$ and $|G|=4q^3$, we deduce $DH=G=DK$ and hence also $HD=G=KD$.
\end{proof}

We let $D\backslash G:=\{Dg\mid g\in G\}$  be the set of right cosets of $D$ in $G$. In view of Proposition~\ref{260220a}, $G$ acts faithfully on $D\backslash G$ and $H$ and $K$ act regularly on $D\backslash G$.

\begin{prop}\label{260220aa}
The subgroups $H$ and $K$ are normal in $G$ and, therefore, are in distinct $G$-conjugacy classes.
\end{prop}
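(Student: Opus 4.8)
The plan is to realise each of $H$ and $K$ as the subset of $G$ cut out by a pair of conditions that are manifestly invariant under conjugation, using the natural action of $G\le\mathrm{SL}_3(\F)$ on the column space $\F^3$, with standard basis $e_1,e_2,e_3$. Since every matrix in $G$ is upper triangular, the flag $\langle e_1\rangle\subset\langle e_1,e_2\rangle\subset\F^3$ consists of $G$-invariant subspaces. First I would record the following elementary principle: if $V$ is a $G$-invariant subspace of $\F^3$, then the conditions ``$g$ acts on $V$ as a scalar matrix'' and ``$g$ acts trivially on the quotient $\F^3/V$'' are each preserved when $g$ is replaced by a $G$-conjugate, since conjugation by $h\in G$ induces on $V$ (respectively on $\F^3/V$) conjugation by the linear map that $h$ induces there, and being scalar or trivial is a conjugation-invariant property of a linear map. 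Hence any subgroup of $G$ cut out by finitely many such conditions is a union of $G$-conjugacy classes, and in particular is normal in $G$.

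The second step is to match these intrinsic conditions with the explicit matrix descriptions. Writing a general element of $G$ as $g=(g_{ij})$, one reads off at once that $g$ acts trivially on $\F^3/\langle e_1,e_2\rangle$ precisely when $g_{33}=1$ --- which, through $g_{11}g_{22}g_{33}=1$ and $g_{ii}\in\{-1,1\}$, forces $g_{11}=g_{22}$ --- and that such a $g$ acts as a scalar on $\langle e_1,e_2\rangle$ precisely when $g_{12}=0$; together these are exactly the defining equations of $H$. Symmetrically, $g$ acts trivially on $\langle e_1\rangle$ precisely when $g_{11}=1$ --- forcing $g_{22}=g_{33}$ --- and then acts as a scalar on $\F^3/\langle e_1\rangle$ precisely when $g_{23}=0$; these are the defining equations of $K$. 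Since $H$ and $K$ are subgroups of $G$, the principle from the first step gives $H\trianglelefteq G$ and $K\trianglelefteq G$.

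Finally, a normal subgroup is the unique member of its $G$-conjugacy class of subgroups, so it only remains to observe that $H\ne K$ --- for instance $\mathrm{diag}(-1,-1,1)$ lies in $H$, while every element of $K$ has $(1,1)$-entry equal to $1$ --- whence $H$ and $K$ lie in distinct $G$-conjugacy classes. I do not anticipate any genuine obstacle here. The one point requiring care is the conjugation-invariance claim in the first step: it should be argued at the level of the $G$-module structure of the invariant subspace and quotient, where ``scalar'' and ``trivial'' have intrinsic meaning, rather than in terms of matrix entries, which are basis-dependent. A brute-force alternative --- conjugating the generic matrix of $H$ (or of $K$) by the generic matrix of $G$ and checking that the result lies back in $H$ (respectively $K$) --- would also succeed but is messier, so I would present the module-theoretic argument.
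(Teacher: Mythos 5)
Your proof is correct. The paper disposes of this proposition in one line, asserting that the normality of $H$ and $K$ ``can be checked by direct computations'' --- i.e.\ by conjugating a generic element of $H$ (resp.\ $K$) by a generic element of $G$ and verifying the product lands back in the subgroup, exactly the brute-force route you set aside. Your alternative is genuinely different in presentation: since every element of $G$ is upper triangular, the flag $\langle e_1\rangle\subset\langle e_1,e_2\rangle$ is $G$-invariant, and you characterise $H$ (resp.\ $K$) inside $G$ by the conjugation-invariant conditions ``trivial on $\F^3/\langle e_1,e_2\rangle$ and scalar on $\langle e_1,e_2\rangle$'' (resp.\ ``trivial on $\langle e_1\rangle$ and scalar on $\F^3/\langle e_1\rangle$''); the matching with the defining matrix equations ($g_{33}=1$, $g_{12}=0$ for $H$; $g_{11}=1$, $g_{23}=0$ for $K$, the equality of the remaining diagonal entries being forced by $\det=1$ and entries $\pm1$) is accurate, and a subgroup that is a union of $G$-conjugacy classes is normal. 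The concluding step --- normal subgroups are alone in their conjugacy class, and $H\ne K$ since $\mathrm{diag}(-1,-1,1)\in H$ while every element of $K$ has $(1,1)$-entry $1$ --- is also right. What your approach buys is a computation-free, basis-independent explanation of \emph{why} $H$ and $K$ are normal (they are the kernels-with-scalars attached to the two ends of the invariant flag, which also makes it transparent that no inner automorphism can swap them); what the paper's route buys is brevity, since the generic conjugation check is a few lines of matrix multiplication and requires no module-theoretic setup.
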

\begin{proof}
%Let $P$ be a Sylow $q$-subgroup of $G$. Observe that $P\unlhd G$ and that $G/P\cong C_2\times C_2$ is an elementary abelian $2$-group of order $4$. As $HP/P$ and $KP/P$ are distinct subgroups having order $2$ in $G/P$, $HP/P$ and $KP/P$ cannot be conjugate in $G/P$ and hence $H$ and $K$ cannot be conjugate in $G$.
The normality of $H$ and $K$ in $G$ can be checked by direct computations.
\end{proof}

\subsection{Schur notation}\label{schurnotation}
Since $G=DH$ and $D\cap H=1$, for every $g\in G$, there exists a unique $h\in H$ with $Dg=Dh$.  In this way, we obtain a bijection $\theta:D\backslash G\to H$, where $\theta(Dg)=h\in H$ satisfies $Dg=Dh$.

Using the method of Schur~(see~\cite{Wielandt1964}),  we may identify via $\theta$ the $G$-set $D\backslash G$ with $H$. Moreover,  we may define an action of $G$ on $H$ via the following rule: for every $g\in G$ and for every $h\in H$,
$$
h^g = h'\hbox{  if and only if  } Dhg = Dh'.
$$
A classic observation of Schur yields that the action of $G$ on $D\backslash G$ is permutation isomorphic to the action of $G$ on $H$.
In the rest of the paper, we use both points of view.

In the action of $G$ on $H$, $D$ is a stabiliser of the identity $e\in H$, i.e. $G_e = D$, and $H$ acts on itself via its right regular representation. Since $H$ is normal in $G$, the action of the point stabiliser $G_e$ on $H$ is permutation equivalent to the action of $G_e$ via conjugation on $H$ (Proposition 20.2 \cite{Wielandt1964}). More precisely, $h^g = g^{-1}hg$ for any $g\in G_e$ and $h\in H$.

% The 2-orbit(orbital) of $G$ corresponding to the intersection $S:=DhD\cap H$
%consists of all pairs $(x,y)\in H\times H$ with $xy^{-1}\in S$.  The (di)graph corresponding to this 2-orbit is a Cayley (di)graph over $H$ with connection set $S$. In what follows we write $\cay{H}{S}$ for the set of all ordered pairs satisfying $xy^{-1}\in S$. Thus
%$(h,e)^G =\cay{H}{DhD\cap H}$ for any $h\in H$.

In what follows, we represent the elements of $H$ and $D$ as pairs $\lbrack a,x\rbrack$ and $\lbrack a,\vec{w}\rbrack$, where $x\in \F$, $\vec{w}\in\F^2$ and $a\in\{\pm 1\}$. In particular, $\lbrack a,x\rbrack$ represents the matrix
$$\begin{pmatrix}
a&ax&ax^2/2\\
0&1&x\\
0&0&a
\end{pmatrix}$$of $D$ and, if $\vec{w}=(x,y)$, then $\lbrack a,\vec{w}\rbrack$ represents the matrix
$$\begin{pmatrix}
a&0&x\\
0&a&y\\
0&0&1
\end{pmatrix}$$of $H$.
Under this identification, the product in $D$ and $H$ greatly simplifies. Indeed, for every $\lbrack a,x\rbrack,\lbrack b,y\rbrack\in D$ and for every $\lbrack a,\vec{v}\rbrack,\lbrack b,\vec{w}\rbrack\in H$, we have
\begin{align}\label{eq2}
\lbrack a,x\rbrack\lbrack b,y\rbrack&=\lbrack ab,bx+y\rbrack,\\
\lbrack a,\vec{v}\rbrack\lbrack b,\vec{w}\rbrack&=\lbrack ab,b\vec{v}+\vec{w}\rbrack.\nonumber
\end{align}
Using this identification, the action of $D$ on $H$ also becomes slightly easier. Indeed, for every $\lbrack a,\vec{v}\rbrack\in H$ (with $\vec{v}=(x,y)$) and for every $\lbrack b,z\rbrack\in D$, we have
\begin{equation}\label{eq1}\lbrack a,(x,y)\rbrack^{\lbrack b,z\rbrack}=\lbrack a,\left((1-a)z^2/2-byz+x,(-1+a)z+by\right)\rbrack.
\end{equation}
This equality can be verified observing that
\[
\begin{pmatrix}
a&0&x\\
0&a&y\\
0&0&1
\end{pmatrix}
\begin{pmatrix}
b&bz&bz^2/2\\
0&1&z\\
0&0&b
\end{pmatrix}
=
\begin{pmatrix}
b&bz&bz^2/2\\
0&1&z\\
0&0&b
\end{pmatrix}
\begin{pmatrix}
a&0&(1-a)z^2/2-byz+x\\
0&a&(-1+a)z+by\\
0&0&1
\end{pmatrix}.
\]
\subsection{One special case}\label{sec:special}Let $A:=\langle e_1,e_2,e_3\rangle$, where $e_1:=(1\,2\,3)$, $e_2:=(4\,5\,6)$,
$e_1:=(7\,8\,9)$, let $x:=(1\,2)(4\,5)(7\,8)$ and let $R:=\langle A,x\rangle$. Then $R$ is a generalised dihedral group over the elementary abelian $3$-group $A$ of order $3^3=27$. Let
$$S:=\{x,\,e_1x,\,e_2x,\,e_3x,\,e_1e_2x,\,e_1^2e_2^2x,\,e_2e_3x,\,e_2^2e_3^2x,\,e_1^2e_2^2e_3^2x\}$$
and define
$$\Gamma:=\cay{R}{S}.$$
It can be verified with a computation with the computer algebra system \texttt{magma} that $\mathrm{Aut}(\Gamma)$ has order $46656=2^6\cdot 3^6$, acts transitively on the arcs of $\Gamma$ and (most importantly) contains two conjugacy classes of regular subgroups isomorphic to $R$ and hence, via Babai's lemma, $R$ is not a CI-group.

This example has another interesting property from the isomorphism problem point of view.  Observe that each element of $S$ is an involution contained in $R\setminus A$.  This implies that $\Gamma$ is a bipartite graph, in which case $\Gamma$ is isomorphic to a Haar graph, also called a bi-coset graph.  %Let $G$ be a group, and $S\subseteq G$.  A Haar graph of $G$ with connection set $S$ has vertex set $G\times\Z_2$ and edge set $$\{\{(g,0),(sg,1)\}:g\in G{\rm\ and\ }s\in S\}.$$  So a Haar graph is a bipartite analogue of a Cayley graph.  There is a corresponding isomorphism problem for Haar graphs, and if the group $A$ is abelian, it is equivalent to the isomorphism problem for Cayley graphs of generalised dihedral groups ${\rm Dih}(A)$ that are bipartite (for nonabelian groups the problems are not equivalent, as for non-abelian groups Haar graphs need not be transitive), see \cite[Lemma 2.2]{KoikeK2019}.  If isomorphic bipartite Cayley graphs of ${\rm Dih}(A)$ are isomorphic by group automorphisms of $A$, we say $A$ is a {\bf BCI-group}.  {\color{blue} It has been conjectured \cite{ArezoomandT2015} that every BCI-group is a CI-group.}
In our example above, as every element of the connection set is an involution, it is a Haar graph of $\Z_3^3$ but as it is not a CI-graph of ${\rm Dih}(\Z_3^3)$, $\Z_3^3$ is not a BCI-group.  This is the first example the authors are aware of where a group is a DCI-group but not a BCI-group, as $\Z_p^3$ is a CI-group \cite{Dobson1995}, and so we have a counter example to the conjecture \cite{ArezoomandT2015} that every BCI-group is a CI-group.   As a quotient of a CI-group is a CI-group, and a quotient of a bipartite Cayley graph of $G$ with bipartition the orbits of $H\le G$ by a quotient of $K\le H$ is still bipartite, $\Z_3^k$ is not a BCI-group for any $k\ge 3$.

Finally, this graph, as well as the  graphs constructed in the next section, have the property that the Sylow $p$-subgroups
of their automorphism groups are not isomorphic to Sylow $p$-subgroups of any $2$-closed group of degree $3^3$ or $p^2$ (in the next section).  For the example above, the Sylow $p$-subgroups of the automorphism groups of Cayley digraphs of $\Z_p^3$ can be obtained from \cite[Theorem 1.1]{DobsonK2009}, and none have order $3^6$ as a Sylow $p$-subgroup of ${\rm AGL}(3,p)$ is not $2$-closed (for $p^2$ in the next section, the Sylow $p$-subgroup has order $p^3$, but Sylow p-subgroups of the automorphism groups of Cayley digraphs of $\Z_p^2$ have order $p^2$ or $p^{p + 1}$ \cite[Theorem 14]{DobsonW2002}).

\section{The permutation group $G$ is $2$-closed}\label{sec:3}
In this section we prove the following.
\begin{prop}\label{26020aaa}The group $G$ in its action on $H$ is $2$-closed.
\end{prop}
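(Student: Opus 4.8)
The plan is to prove directly that the $2$-closure $G^{(2)}$ of $G$ acting on $H$ (throughout, we use the Schur identification of $D\backslash G$ with $H$ from Subsection~\ref{schurnotation}) equals $G$. Since $H\le G\le G^{(2)}$ and $H$ is transitive, it suffices to show that the stabiliser $G^{(2)}_e$ of the identity $e\in H$ equals $G_e=D$; for then $G^{(2)}=H\,G^{(2)}_e=HD=G$ by Proposition~\ref{260220a}. As $H\trianglelefteq G$, the stabiliser $G_e=D$ acts on $H$ by conjugation, so~\eqref{eq1} lets one read off the $D$-orbits on $H$, i.e. the suborbits of $G$: the $q$ singletons $\{[1,(x,0)]\}$ with $x\in\F$, whose union is the subgroup $C:=\{[1,(x,0)]\mid x\in\F\}$ of $H$; the $(q-1)/2$ suborbits $\{[1,(x,y)]\mid x\in\F,\ y\in\{y_0,-y_0\}\}$ of length $2q$, one for each $\{y_0,-y_0\}$ with $0\ne y_0\in\F$; and the $q$ suborbits $P_c:=\{[-1,(x,y)]\mid 4x-y^2=c\}$ of length $q$, one for each $c\in\F$ (indeed $q+q(q-1)+q^2=2q^2=|H|$). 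Write $H_1$ and $H_{-1}$ for the two cosets of $\{[1,\vec v]\mid\vec v\in\F^2\}$ in $H$.

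Fix $\sigma\in G^{(2)}_e$. Since $\sigma$ fixes $e$ and preserves every suborbit setwise, it fixes $C$ pointwise and, because $H_{-1}=\bigcup_{c\in\F}P_c$, it maps each of $H_1$ and $H_{-1}$ to itself. Moreover $\cay{H}{C}$ is a union of orbital digraphs of $G$ (those attached to the singleton suborbits in $C$), so $G^{(2)}\le\mathrm{Aut}(\cay{H}{C})$; the connected components of $\cay{H}{C}$ are the cosets of $C$ in $H$, so $\sigma$ preserves the partition $\mathcal B$ of $H$ into these $2q$ cosets. Inside $H_{-1}$ the cosets of $C$ are the sets $B_{y_0}:=\{[-1,(x,y_0)]\mid x\in\F\}$, and each suborbit $P_c$ meets each $B_{y_0}$ in a single point; the bijection $[-1,(x,y)]\mapsto(y,\,4x-y^2)$ identifies $H_{-1}$ with $\F^2$ so that the $B_{y_0}$ become the lines with first coordinate constant and the $P_c$ the lines with second coordinate constant. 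Because $\sigma$ fixes every $P_c$ setwise and permutes the $B_{y_0}$, it follows that on $H_{-1}$ the permutation $\sigma$ has the form $(y,t)\mapsto(\pi(y),t)$ for a single $\pi\in\mathsf{Sym}(\F)$.

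Now I would bring in the orbital digraphs $\cay{H}{P_c}$, $c\in\F$; each $P_c$ consists of involutions, so these are undirected and $G^{(2)}\le\mathrm{Aut}(\cay{H}{P_c})$. A short computation with~\eqref{eq2} shows that the out-neighbourhood of $[1,(u,v)]\in H_1$ in $\cay{H}{P_c}$ is $P_c\,[1,(u,v)]$, which in the above coordinates on $H_{-1}$ is the affine line $\{(s,\,c+4u+v^2-2vs)\mid s\in\F\}$ of slope $-2v$. Since $\sigma$ is an automorphism of every $\cay{H}{P_c}$ and acts on $H_{-1}$ through $\pi$, applying $\sigma$ to the out-neighbourhoods of $[1,(0,v)]$ and comparing with the out-neighbourhoods of $\sigma([1,(0,v)])$, as $c$ ranges over $\F$, forces $\pi$ to be affine, say $\pi^{-1}(s)=\lambda s+\mu$ with $\lambda\in\{\pm1\}$ and $\mu\in\F$ (the restriction $\lambda\in\{\pm1\}$ following because $\sigma$ permutes $\mathcal B$ while preserving the suborbits), and then forces $\sigma([1,(0,v)])=[1,(-\mu v/2,\,\lambda v)]$ for all $v\in\F$. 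One checks that $d_0:=[\lambda,\lambda\mu/2]\in D$ induces on $H$ the same map as $\sigma$ on $C$, on $H_{-1}$, and on every $[1,(0,v)]$. Hence $\tau:=\sigma d_0^{-1}\in G^{(2)}_e$ fixes $C\cup H_{-1}\cup\{[1,(0,v)]\mid v\in\F\}$ pointwise. Since $\tau$ preserves $\mathcal B$ and fixes each $[1,(0,v)]$, it fixes each coset $\{[1,(x,v)]\mid x\in\F\}$ setwise, so $\tau([1,(u,v)])=[1,(u',v)]$ for some $u'$; but $\tau$ is an automorphism of $\cay{H}{P_c}$ fixing $H_{-1}$ pointwise, so the out-neighbourhoods of $[1,(u,v)]$ and of $[1,(u',v)]$ --- two lines of slope $-2v$ --- must coincide, whence $u=u'$. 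Thus $\tau=1$, $\sigma=d_0\in D$, and therefore $G^{(2)}_e=D$ and $G^{(2)}=G$.

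The argument is elementary, and I expect the only real work to be keeping the coordinate computations from~\eqref{eq1} and~\eqref{eq2} organised. The two structural facts that make it run are that the rows $B_{y_0}$ and the suborbits $P_c$ form a pair of transversal partitions of $H_{-1}$, so that $\sigma$ restricted to $H_{-1}$ is governed by a single permutation $\pi$ of $\F$, and that, as $c$ varies, the neighbourhoods of the vertices of $H_1$ in $\cay{H}{P_c}$ sweep out the affine lines of $H_{-1}$ in a controlled way that rigidifies $\sigma$ entirely.
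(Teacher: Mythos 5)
Your proof is correct, but it follows a genuinely different route from the paper's. The paper's argument is local: it shows (for $q\ge 5$) that $\{e\}\cup P_0$ separates the $G$-orbitals, invokes~\cite[Theorem~2.5.7]{FaradzevKM1994} to conclude that the point stabiliser of the $2$-closure acts faithfully on the single parabolic suborbit $P_0$, and then pins down the induced group on $P_0$ as $\mathrm{Dih}(\F^+)$ of order $2q$ by intersecting the automorphism groups of the induced graphs $\Phi_t\cong\cay{\F^+}{\{-t,t\}}$; the case $q=3$ is settled by machine. You instead argue globally and elementarily: an element $\sigma$ of the stabiliser of $e$ in the $2$-closure fixes the singleton suborbits pointwise, respects the coset partition coming from $\cay{H}{H_2}$ and the parabolic suborbits, and these two transversal partitions of the coset $H\setminus H_1$ force $\sigma$ there to be governed by one permutation $\pi$ of $\F$; the affine-line shape of the $\cay{H}{P_c}$-neighbourhoods of vertices of $H_1$ (your computation, which I checked, is right, as is the verification that $d_0=[\lambda,\lambda\mu/2]\in D$ matches $\sigma$ and that the residual $\tau$ is trivial) then rigidifies $\sigma$ into an element of $D=G_e$, and the Frattini step $G^{(2)}=HG^{(2)}_e=HD=G$ finishes. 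What your approach buys: it avoids the separation machinery of~\cite{FaradzevKM1994} altogether, it is uniform in $q$ (in particular it removes the \texttt{magma} computation for $q=3$), and it exhibits the stabiliser element explicitly; what the paper's approach buys is brevity and a reusable template (separating set plus identification of the induced group on one suborbit) that transfers to other Schur-type situations. Two cosmetic caveats: your $P_c$ is the paper's $P_{c/4}$ (harmless, since you are internally consistent), and the assertion $\lambda\in\{\pm1\}$ deserves one explicit line — since $[1,(0,v)]$ lies in the suborbit $C_v\cup C_{-v}$, its image has second coordinate $\pm v$, and the slope of $\pi$ is $v/v'$, hence $\pm1$.
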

We start with some preliminary observations.
\begin{lemma}\label{260220b} The orbits of $G_e$ on $H$ have one of the following forms:
\begin{enumerate}
\item\label{type1} $S_t:=\{\lbrack 1,(t,0) \rbrack\}$, for every $t\in\mathbb{F}$;
\item\label{type2} $C_t\cup C_{-t}$, where $C_t:=\left\{\lbrack 1, (z,t)\rbrack \mid z\in\mathbb{F}\right\}$ and $t\in\F\setminus\{0\}$;
\item\label{type3} $P_t := \left\{\lbrack -1,(t+z^2,2z)\rbrack\mid z\in\mathbb{F}\right\}$ with $t\in\mathbb{F}$.
\end{enumerate}
\end{lemma}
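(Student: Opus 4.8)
The plan is to read everything off the explicit action formula~\eqref{eq1}, which by Subsection~\ref{schurnotation} describes the conjugation action of the point stabiliser $G_e=D$ on $H$. Fix an element $[a,(x,y)]\in H$ and distinguish the two cases $a=1$ and $a=-1$.

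In the case $a=1$, formula~\eqref{eq1} collapses to $[1,(x,y)]^{[b,z]}=[1,(x-byz,\,by)]$. If $y=0$ the element is fixed by every $[b,z]\in D$, which yields the singleton orbits $S_t$ of type~\ref{type1}. If $y\neq0$, then as $b$ runs through $\{\pm1\}$ and $z$ through $\F$ the second coordinate $by$ takes exactly the two values $\pm y$, and for each of them the first coordinate $x-byz$ runs through all of $\F$ because $by\neq0$; hence the $D$-orbit is precisely $C_y\cup C_{-y}$, of type~\ref{type2} (and since $t\neq-t$ for $t\neq0$, the sets $C_t$ and $C_{-t}$ are disjoint, each of size $q$).

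In the case $a=-1$, formula~\eqref{eq1} reads $[-1,(x,y)]^{[b,z]}=[-1,(x-byz+z^2,\,by-2z)]$. The crucial observation, found by completing the square (which is where most of the thinking goes), is that the scalar $t:=x-y^2/4$ is a $D$-invariant; concretely, using $b^2=1$ and the invertibility of $2$ and $4$ in $\F$ (the characteristic is odd) one checks the identity $x-byz+z^2-\tfrac14(by-2z)^2=x-\tfrac14 y^2$. Conversely, for any $w\in\F$ the choice $b=1$, $z=(y-w)/2$ carries $[-1,(x,y)]$ to $[-1,(t+w^2/4,\,w)]$; writing $w=2z'$ this is $[-1,(t+z'^2,2z')]$, so the orbit of $[-1,(x,y)]$ is exactly $P_t=\{[-1,(t+z^2,2z)]\mid z\in\F\}$ of type~\ref{type3}. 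The parametrisation $z\mapsto[-1,(t+z^2,2z)]$ is injective, so $|P_t|=q$.

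Finally, every element of $H$ has been placed in one of the sets $S_t$, $C_t\cup C_{-t}$, $P_t$, and each of these was shown to be a single $D$-orbit; as these sets are pairwise disjoint (distinguished by the value of $a$, by whether $y=0$, and by the invariant $t$), the list is exhaustive and the lemma follows. As a numerical check, the orbit sizes sum to $q\cdot1+\tfrac{q-1}{2}\cdot2q+q\cdot q=2q^2=|H|$. The only step with any subtlety is isolating and verifying the invariant $t=x-y^2/4$ in the case $a=-1$; everything else is bookkeeping.
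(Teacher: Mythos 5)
Your proof is correct and follows essentially the same route as the paper: both read the orbits directly off the conjugation formula~\eqref{eq1}, splitting into $a=1$ (with the $y=0$ and $y\neq 0$ subcases giving the singleton and coset orbits) and $a=-1$, where completing the square identifies the parabolic orbits. The only cosmetic difference is that you package the $a=-1$ case via the invariant $t=x-y^2/4$, whereas the paper normalises to the representative $[-1,(t,0)]$ and computes its orbit; these are the same computation.
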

\begin{proof} Let $g:=[a,(x,y)]\in H$. If $a=1$ and $y=0$, then~\eqref{eq1} yields
$$g^{\lbrack b,z\rbrack}=[1,(x,0)]=g$$
and hence the $G_e$-orbit containing $g$ is simply $\{g\}$. Therefore we obtain the orbits in Case~\eqref{type1}.

Suppose then $a=1$ and $y\ne 0$. Now,~\eqref{eq1} yields
\begin{align*}
g^{[1,z]}&=[1,(-yz+x,y)],\\
g^{[-1,z]}&=[1,(yz+x,-y)].
\end{align*}
In particular, $C_y=\{g^{[1,z]}\mid z\in\mathbb{F}\}$ and $C_{-y}=\{g^{[-1,z]}\mid z\in\mathbb{F}\}$ and we obtain the orbits in Case~\eqref{type2}.

Finally suppose $a=-1$.
 Now,~\eqref{eq1} yields
 $$g^{\lbrack b,z\rbrack}=\lbrack 1,(z^2-byz+x,-2z+by)\rbrack.$$
In particular, if we choose $z:=by/2$, then $g$ and $\lbrack -1,(t,0)\rbrack$ (with $t=-y^2/4+x$) are in the same $G_e$-orbit.
Therefore $[-1,(x,y)]^{G_e}=[-1,(t,0)]^{G_e}$ where $t:=-y^2/4+x$.  Using again~\eqref{eq1}, we get
$$\lbrack -1,(t,0)\rbrack^{\lbrack b,-z\rbrack}=\lbrack -1,(t+z^2,2z)\rbrack.$$
In particular, $P_t=\{g^{[b,z]}\mid \lbrack b,z\rbrack\in G_e\}$ and we obtain the orbits in Case~\eqref{type3}.
\end{proof}

We call the $G_e$-orbits in~\eqref{type1} \textit{\textbf{singleton orbits}}, the $G_e$-orbits in~\eqref{type2} \textit{\textbf{coset orbits}} and the $G_e$-orbits in~\eqref{type3} \textit{\textbf{parabolic orbits}}. Clearly, singleton orbits have cardinality $1$, coset orbits have cardinality $2q$ and parabolic orbits have cardinality $q$. Also, it follows from Lemma~\ref{260220b} that there are $q$ singleton orbits, $\frac{q-1}{2}$ coset orbits and $q$ parabolic orbits. Indeed,
$$q\cdot 1+\frac{q-1}{2}\cdot 2q+q\cdot q=2q^2=|H|.$$
 It is also clear from Lemma~\ref{260220b} that  all non-singleton orbits are self-paired and the only self-paired singleton orbit is $S_0$.

Before continuing, we recall~\cite[Definitions~2.5.3 and 2.5.4]{FaradzevKM1994} tailored to our needs.

\begin{definition}{\rm We say that  $h\in H$ \textit{\textbf{separates}} the pair $(h_1,h_2)\in H\times H$,  if $(h,h_1)$ and $(h,h_2)$ belong to distinct $G$-orbitals, that is, $hh_1^{-1}$ and $hh_2^{-1}$ are in distinct $G_e$-orbits.

We also say that a subset $S\subseteq H$ \textit{\textbf{separates}} $G$-orbitals if, for any two distinct elements $h_1, h_2\in H\setminus S$, there exists $s\in S$ separating the pair $(h_1,h_2)$. }
\end{definition}

\begin{prop}\label{290220a} If $q\geq 5$, then $\{e\}\cup P_0$ separates $G$-orbitals.
\end{prop}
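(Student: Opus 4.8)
The plan is to show that for any two distinct elements $h_1,h_2\in H\setminus(\{e\}\cup P_0)$, at least one element of $\{e\}\cup P_0$ separates the pair, i.e.\ sends it into two different $G_e$-orbits under right multiplication. First I would record, using the product formula~\eqref{eq2} and the orbit description in Lemma~\ref{260220b}, exactly which $G_e$-orbit each of the elements $eh_i^{-1}=h_i^{-1}$ and $ph_i^{-1}$ (for $p\in P_0$, so $p=[-1,(z^2,2z)]$) lies in, as a function of the ``coordinates'' of $h_i$. Concretely, if $h=[1,(u,v)]$ with $v\ne 0$ then $h^{-1}=[1,(-u,-v)]$ lies in the coset orbit $C_v\cup C_{-v}$, while if $h=[-1,(u,v)]$ then $h^{-1}=[-1,(\ast,\ast)]$ and by the computation in the proof of Lemma~\ref{260220b} it lies in the parabolic orbit $P_t$ with $t$ determined by $u,v$; and if $h=[1,(u,0)]$ then $h^{-1}$ is the singleton $S_{-u}$.

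Next I would treat the cases according to the types of $h_1$ and $h_2$. If the two elements lie in $G_e$-orbits of different \emph{cardinalities} as seen from $e$ (singleton vs.\ coset vs.\ parabolic), then $e$ itself already separates them and there is nothing to do; so the real content is when $e h_1^{-1}$ and $e h_2^{-1}$ lie in the same orbit. That leaves: (a) both in the same coset orbit $C_v\cup C_{-v}$, meaning $h_1=[1,(u_1,\pm v)]$, $h_2=[1,(u_2,\pm v)]$; (b) both in the same parabolic orbit, meaning $h_1,h_2\in[-1,(t,0)]^{G_e}$ for a common $t$; (c) both equal to the same singleton (impossible, since then $h_1=h_2$). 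In cases (a) and (b) I would multiply $h_1,h_2$ on the left by a generic $p=[-1,(z^2,2z)]\in P_0$, compute $p h_i^{-1}$ via~\eqref{eq2}, read off (via Lemma~\ref{260220b}, as above) the invariant of its $G_e$-orbit — for a product landing in $H$ with first sign $-1$ this invariant is the scalar $t=-(\text{second coord})^2/4+(\text{first coord})$ defining the parabolic orbit — and show that this invariant is a nonconstant polynomial (of degree $\le 2$) in $z$ whose difference between $i=1$ and $i=2$ is not identically zero, hence has at most two roots. Since $q\ge 5$ there are at least $5>2$ choices of $z$ (in fact one must also avoid the finitely many $z$ with $p\in\{e\}\cup\{h_1,h_2\}$, but $q\ge 5$ leaves room), so some $p\in P_0$ separates the pair.

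The main obstacle I expect is case (a): when $h_1=[1,(u_1,v)]$ and $h_2=[1,(u_2,v)]$ share the same second coordinate but $u_1\ne u_2$, the element $p h_i^{-1}$ has first sign $-1$ and its parabolic invariant turns out to be $t_i = -( \text{linear in } z)^2/4 + (\text{quadratic in } z \text{ involving } u_i)$; one must check that the $z^2$ and $z^1$ terms cancel in the \emph{difference} $t_1-t_2$, leaving $t_1-t_2 = u_1-u_2\ne 0$ constant — so in fact \emph{every} $p\in P_0$ separates the pair, which is even stronger. The genuinely delicate sub-case is when the two second coordinates are $v$ and $-v$ (both still giving orbit $C_v\cup C_{-v}$): here the difference $t_1(z)-t_2(z)$ is a nonzero polynomial of degree exactly $1$ or $2$ in $z$, and one needs $q\ge 5$ precisely to guarantee a valid $z$ outside its (at most two) roots and outside the at most two forbidden values making $p h_i^{-1}=e$; I would verify the polynomial is not identically zero by inspecting its leading coefficient, which is where the oddness of the characteristic (so that dividing by $2$ and $4$ is legitimate) is used. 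The parabolic-parabolic case (b) is handled the same way and is easier, since there the relevant invariant is already linear in $z$.
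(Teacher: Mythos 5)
Your overall strategy coincides with the paper's: use $e$ to separate pairs lying in distinct $G_e$-orbits, and for a pair in a common coset or parabolic orbit pick $s=[-1,(z^2,2z)]\in P_0$ with $z$ avoiding the roots of an explicit low-degree polynomial, which $q\ge 5$ permits. Your case (a) is worked out correctly and matches the paper's coset-orbit case: $sh_i^{-1}=[-1,(z^2-u_i,\,2z-w_i)]$ lies in $P_{t_i}$ with $t_i=w_iz-u_i-w_i^2/4$ (the $z^2$ terms cancel), so the difference is the nonzero constant $u_2-u_1$ when $w_1=w_2$, and the degree-one polynomial $2vz-u_1+u_2$ when $w_1=-w_2=v$.

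The gap is in case (b), which you dismiss as ``handled the same way and easier.'' There $sh_i^{-1}=[1,(t+x_i^2-z^2,\,2x_i-2z)]$ has first sign $+1$, so for $z\notin\{x_1,x_2\}$ it lies in a \emph{coset} orbit $C_{2(x_i-z)}\cup C_{-2(x_i-z)}$, whose label is determined only up to sign. Your stated criterion --- compute a scalar invariant for each product and require their \emph{difference} to be a nonzero polynomial in $z$ --- is therefore insufficient here: the difference $2(x_1-z)-2(x_2-z)=2(x_1-x_2)$ is a nonzero constant, yet the element of $P_0$ with $z=(x_1+x_2)/2$ does \emph{not} separate, because then the two labels are negatives of one another and the two products lie in the same orbit $C_{x_1-x_2}\cup C_{-(x_1-x_2)}$. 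One must also exclude the root of the \emph{sum}, i.e.\ impose $x_1-z\ne-(x_2-z)$ (this is precisely the paper's condition $x-x_1\ne-(x-x_2)$), and it is this case --- excluding up to the three values $x_1,x_2,(x_1+x_2)/2$ --- where the hypothesis $q\ge 5$ is genuinely used. By contrast, in your case (a) the ``forbidden values'' you set aside cannot occur at all ($sh_i^{-1}=e$ would force $h_i\in P_0\subseteq S$, which is excluded), and at most one $z$ is bad there, so even $q=3$ would suffice; you have located the role of $q\ge 5$ in the wrong case. The repair is a one-line addition, but as written the argument in the one case where the hypothesis matters does not go through.
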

\begin{proof} Set $S:=\{e\}\cup P_0$. Let $h_1,h_2\in H\setminus S$ be two distinct elements. If $h_1$ and $h_2$ belong to distinct $G_e$-orbits, then $e\in S$ separates $(h_1,h_2)$. Therefore,  we assume that $h_1$ and $h_2$ belong to the same $G_e$-orbit, say, $O$. Since $h_1\neq h_2$, $O$ is not a singleton orbit and hence $O$ is either a coset or a parabolic orbit.

Assume first that $O$ is a parabolic orbit, that is, $O= P_t$, for some $t\in \mathbb{F}$. By Lemma \ref{260220b}, for each $i\in \{1,2\}$, there exists $x_i\in\mathbb{F}$ with
$h_i = \lbrack -1, (t+x_i^2,2x_i)\rbrack$. As $q=|\mathbb{F}|\ge 5$, it is easy to verify that there exists $x\in \mathbb{F}$ with $x\notin \{x_1,x_2\}$ and with $x-x_1\ne -(x-x_2)$. Now, let $s := \lbrack -1,(x^2,2x)\rbrack\in P_0\subseteq S$. From~\eqref{eq2}, we deduce
$$sh_i^{-1} = \lbrack 1, (t+x_i^2-x^2,2x_i-2x)\rbrack.$$
As $2x_i-2x\ne 0$, from Lemma~\ref{260220b}, we obtain  $sh_i^{-1}\in C_{2(x-x_i)}\cup C_{-2(x-x_i)}$. As $x-x_1\ne -(x-x_2)$, we deduce that $sh_1^{-1}$ and $sh_2^{-1}$ are in distinct $G_e$-orbits and hence $s$ separates $(h_1,h_2)$.

Assume now that $O$ is a coset orbit, that is, $O=C_t\cup C_{-t}$, for some $t\in\F\setminus\{0\}$. In this case, for each $i\in \{1,2\}$, there exist $x_i\in\mathbb{F}$ and $a_i\in \{\pm 1\}$ with
$h_i = \lbrack 1, (x_i,a_it)\rbrack$. Let $x\in \mathbb{F}$ with $$xt(a_2-a_1)\ne x_2 - x_1.$$
(The existence of $x$ is clear when $a_1\ne a_2$ and it follows from the fact that $h_1\ne h_2$ when $a_1=a_2$.)
Set $s := \lbrack -1,(x^2,2x)\rbrack\in P_0\subseteq S$. From~\eqref{eq2}, we have
$$sh_i^{-1}\in \lbrack -1, (x^2 - x_i,2x - a_it)\rbrack.$$ In particular, from Lemma~\ref{260220b}, we have $sh_i^{-1}\in P_{t_i}$, for some $t_i\in \mathbb{F}$. Thus, $(x^2 - x_i,2x - a_it)=(t_i+y^2,2y)$, for some $y\in \mathbb{F}$. From this it follows that  $$t_i =x^2 - x_i - \frac{(2x - a_it)^2}{4}.$$
As $xt(a_2-a_1)\ne x_2 - x_1$, a simple computation yields $t_1\ne t_2$ and hence $sh_1^{-1}$ and $sh_2^{-1}$ are in distinct $G_e$-orbits. Therefore, $s$ separates $(h_1,h_2)$.
\end{proof}

\begin{proof}[Proof of Proposition~$\ref{26020aaa}$]
When $q=3$, the proof follows with a computation with the computer algebra system \texttt{magma}~\cite{Magma}. Therefore, for the rest of the proof we suppose $q\ge 5$. Let $T$ be the $2$-closure of $G$. As $\{e\}\cup P_0$ separates the $G$-orbitals, it follows from~\cite[Theorem~2.5.7]{FaradzevKM1994} that the action of $T_e$ on $P_0$ is faithful, and hence so is the action of $G_e$ on $P_0$. We denote by $G_e^{P_0}$ (respectively, $T_e^{P_0}$) the permutation group induced by $G_e$ (respectively, $T_e$) on $P_0$. In particular, $G_e\cong G_e^{P_0}$ and $T_e\cong T_e^{P_0}$.

 We claim that
\begin{equation}\label{eq3}
(T_e)^{P_0} = (G_e)^{P_0}.
\end{equation}
Observe that from~\eqref{eq3} the proof of Proposition~\ref{26020aaa} immediately follows. Indeed,
$T_e\cong  T_e^{P_0}=G_e^{P_0}\cong G_e$ and hence $T_e=G_e$. As $H$ is a transitive subgroup of $G$, we deduce that $G=G_eH=T_eH=T$ and hence $G$ is $2$-closed. Therefore, to complete the proof, we need only establish~\eqref{eq3}.

From Lemma~\ref{260220b}, $|P_0|=q$. Hence $(G_e)^{P_0}$ is a dihedral group of order $2q$ in its natural action on $q$ points.

For each $t\in \F^*$ let $\Phi_t$ be the subgraph of $\cay{H}{C_t\cup C_{-t}}$ induced by $P_0\rangle$. %consider $$\Phi_t:=\cay{H}{C_t\cup C_{-t}}\cap (P_0\times P_0).$$
Let $(h_1,h_2)$ be an arc of $\Phi_t$. As $h_1,h_2\in P_0$,
there exist $x_1,x_2\in \mathbb{F}$ with $h_1=[-1,(x_1^2,2x_1)]$ and $h_2=[-1,(x_2^2,2x_2)]$.
Moreover, $h_2h_1^{-1}\in C_t\cup C_{-t}$ and hence, by~\eqref{eq2}, we obtain
$$h_2h_1^{-1}=[1,(x_2^2-x_1^2,2x_2-2x_1)]\in C_t\cup C_{-t},$$
that is, $2x_2-2x_1\in \{-t,t\}$. This shows that the mapping
\begin{align*}
&P_0\to \mathbb{F}^+\\
&(x^2,2x)\mapsto 2x
\end{align*}
is an isomorphism between the graphs $\Phi_t$ and
$\cay{\mathbb{F}^+}{\{-t,t\}}$.
%$$\Phi_t=\cay{H}{\{-t,t\}}.$$
%Since $(G_e)^{P_0}$ is a dihedral group of degree $q$, we have
%$$(G_e)^{P_0}=\bigcap_{t\in \mathbb{F}^*}\mathrm{Aut}(\cay{H}{\{-t,t\}}).$$
Therefore
$$(G_e)^{P_0}\leq (T_e)^{P_0}\leq
\bigcap_{t\in \mathbb{F}^*}\mathrm{Aut}(\Phi_t)\cong
\bigcap_{t\in \mathbb{F}^*}\mathrm{Aut}(\cay{\mathbb{F}^+}{\{-t,t\}})\cong \mathrm{Dih}(\mathbb{F}^+).$$
Since $(G_e)^{P_0}$ and $\mathrm{Dih}(\mathbb{F}^+)$ are dihedral groups of order $2q$, we conclude that $(G_e)^{P_0} = (T_e)^{P_0} =
\bigcap_{t\in \mathbb{F}^*}\mathrm{Aut}(\Phi_t)$, proving \ref{eq3}.
%
%(Indeed, the intersection on the right hand side can be replared by $\mathrm{Aut}(\cay{H}{\{-t,t\}})$, for any generator $t$ of the multiplicative group of $\F$.)
%As the group on the right hand side is $2$-closed, we deduce that so is $(G_e)^{P_0}$ and hence $(G_e)^{P_0}=(A_e)^{P_0}$.
\end{proof}

\section{Generating graph}

Combining Proposition~\ref{26020aaa}, Proposition~\ref{260220aa}, and Lemma \ref{2}, we have proven that ${\rm Dih}(\Z_p^2)$ is not a CI-group with respect to colour Cayley digraphs for odd primes $p$.  In this section we strengthen that result to Cayley graphs.  %The $2$-closure of a  permutation group $X$ is the automorphism group of the coloured graph formed by the $X$-orbitals. Therefore, in general, $X^{(2)}$ is not the  automorphism group of a single graph. Thus, although $G$ is $2$-closed by Proposition~\ref{26020aaa}, without some further work, we cannot guarantee that $G$ is  actually the automorphism group of a graph.

\subsection{Schur rings}Let $R$ be a finite group with identity element $e$. We denote the group algebra of $R$ over the field $\mathbb{Q}$ by
$\mathbb{Q}R$. For $Y\subseteq R$, we define $$\underline{Y}:=\sum_{y\in Y}y\in \mathbb{Q}R.$$ Elements
of  $\mathbb{Q}R$ of this form will be called \textit{\textbf{simple quantities}}, see~\cite{Wielandt1964}.
A subalgebra $\mathcal{A}$ of the group algebra $\mathbb{Q}R$ is called a \textit{\textbf{Schur ring}} over $R$ if
the following conditions are satisfied:
\begin{enumerate}
\item\label{nr1} there exists a basis of $\mathcal{A}$ as a $\mathbb{Q}$-vector space consisting of simple quantities $\underline{T}_0,\ldots, \underline{T}_r$;
\item\label{nr2} $T_0 = \{ e \}$, $R=\bigcup_{i=0}^rT_i$ and, for every $i,j\in \{0,\ldots,r\}$ with $i\ne j$, $T_i \cap T_j = ∅\emptyset$;
\item\label{nr3} for each $i\in \{0,\ldots,r\}$, there exists $i'$ such that $T_{i'} = \{ t^{-1} \mid  t \in T_i \}$.
\end{enumerate}
Now, $\underline{T}_0,\ldots,\underline{T}_r$ are called the \textit{\textbf{basic quantities}} of $\mathcal{A}$. A subset $S$ of $R$ is said to be an \textit{\textbf{$\mathcal{A}$-subset}} if $\underline{S}\in\mathcal{A}$, which is equivalent to $S = \bigcup_{j\in J} T_j$,  for some $J\subseteq \{0,\ldots,r\}$.

Given two elements $a:=\sum_{x\in R}a_xx$ and $b:=\sum_{y\in R}b_yy$ in $\mathbb{Q}R$, the \textit{\textbf{Schur-Hadamard}} product $a\circ b$ is defined by
$$a\circ b:=\sum_{z\in R}a_zb_zz.$$
It is an elementary exercise to observe that, if $\mathcal{A}$ is a Schur ring over $R$, then $\mathcal{A}$ is closed by the Schur-Hadamard product.

The following statement is known as the \textit{\textbf{Schur-Wielandt principle}}, see~\cite[Proposition~$22.1$]{Wielandt1964}.
\begin{prop}
Let $\mathcal{A}$ be a Schur ring over $R$, let $q\in \mathbb{Q}$ and let $x:=\sum_{r\in R}a_rr\in\cA$. Then
$$x_q:=\sum_{\substack{r\in R\\ a_r=q}}r\in \cA.$$
\end{prop}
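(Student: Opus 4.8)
The final statement to prove is the Schur--Wielandt principle: if $\mathcal{A}$ is a Schur ring over $R$ and $x=\sum_{r\in R}a_rr\in\mathcal{A}$, then $x_q=\sum_{r: a_r=q}r\in\mathcal{A}$ for every $q\in\mathbb{Q}$.

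The plan is to reduce to isolating the coefficient values one at a time using the Schur--Hadamard (entrywise) product, which we already know preserves $\mathcal{A}$. First I would observe that since $x\in\mathcal{A}$ and $\mathcal{A}$ is a subalgebra closed under $\circ$, all powers $x^{\circ k}=x\circ x\circ\cdots\circ x$ lie in $\mathcal{A}$; writing $x^{\circ k}=\sum_r a_r^k r$, we see that every polynomial $\sum_k c_k x^{\circ k}$ (with $c_k\in\mathbb{Q}$) produces the element $\sum_r P(a_r)\,r\in\mathcal{A}$, where $P(T)=\sum_k c_k T^k$. Now the coefficients $a_r$ take only finitely many distinct rational values, say $q=q_0,q_1,\ldots,q_m$. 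If $q$ does not occur among the $a_r$ then $x_q=0\in\mathcal{A}$ trivially; otherwise, by Lagrange interpolation there is a polynomial $P\in\mathbb{Q}[T]$ with $P(q)=1$ and $P(q_j)=0$ for all the other values $q_j$. Then $\sum_r P(a_r)\,r=\sum_{r:\,a_r=q} r = x_q$, which exhibits $x_q$ as a $\mathbb{Q}$-linear combination of the powers $x^{\circ k}\in\mathcal{A}$, hence $x_q\in\mathcal{A}$.

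Two minor points need care. One must make sure the constant term is handled correctly: the element $\sum_r T^0\cdot r$ with $T^0=1$ is $\underline{R}$, and one should either include $k=0$ only when needed or note that $\underline{R}=\sum_{i=0}^r\underline{T}_i\in\mathcal{A}$ anyway by axiom~\eqref{nr2}, so using it is legitimate. Alternatively, if $0$ is among the coefficient values one can choose the interpolating polynomial to have no constant term (possible precisely because $P(0)=0$ is one of the interpolation conditions in that case), sidestepping the issue entirely; if $0$ is not a coefficient value then every $a_r\neq 0$ and a general polynomial is fine since $\underline{R}\in\mathcal{A}$. The second point is simply that $x^{\circ 1}=x$, so the family $\{x^{\circ k}\}_{k\ge 0}$ genuinely sits inside $\mathcal{A}$.

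There is no serious obstacle here; the only thing that could be called the crux is the realisation that the Schur--Hadamard product turns coordinatewise polynomial evaluation into an operation internal to $\mathcal{A}$, after which Lagrange interpolation does all the work. I would therefore present the argument in three short steps: (i) closure of $\mathcal{A}$ under $\circ$ gives $x^{\circ k}\in\mathcal{A}$ and hence $\sum_r P(a_r)r\in\mathcal{A}$ for any $P\in\mathbb{Q}[T]$; (ii) the set $\{a_r:r\in R\}$ is finite, so Lagrange interpolation yields $P$ with $P(q)=1$ and $P$ vanishing on the other coefficient values; (iii) conclude $x_q=\sum_r P(a_r)r\in\mathcal{A}$, treating the trivial case $x_q=0$ separately.
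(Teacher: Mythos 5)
Your argument is correct, but note that the paper does not actually prove this proposition: it is quoted from Wielandt's book (Proposition 22.1 there), so there is no in-text proof to match. Your route -- Hadamard powers $x^{\circ k}=\sum_r a_r^k\,r\in\mathcal{A}$, plus $\underline{R}\in\mathcal{A}$ to absorb the constant term, plus Lagrange interpolation through the finitely many rational coefficient values -- is the classical proof and all the steps check out, including the trivial case where $q$ does not occur among the $a_r$. It is worth observing, though, that in the setting of this paper a one-line argument is available and is really what underlies both the Hadamard-closure "exercise" you invoke and the principle itself: since $x\in\mathcal{A}$ and $\mathcal{A}$ has a $\mathbb{Q}$-basis of simple quantities $\underline{T}_0,\dots,\underline{T}_r$ whose supports partition $R$, one can write $x=\sum_i c_i\underline{T}_i$, so the coefficient function $r\mapsto a_r$ is constant on each basic set $T_i$; hence $x_q=\sum_{i:\,c_i=q}\underline{T}_i\in\mathcal{A}$ immediately. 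Your interpolation proof buys something only in a more abstract setting where one takes closure under the Schur--Hadamard product as the defining axiom rather than the existence of a standard basis of simple quantities; here it is a longer path to the same fact, but it is sound.
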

Let $X$ be a permutation group containing a regular subgroup $R$. As in Section~\ref{schurnotation}, we may identify the domain of $X$ with $R$. Let $T_0,\ldots,T_r$ be the orbits of $X_e$ with $T_0=\{e\}$.
A fundamental result of Schur~\cite[Theorem~$24.1$]{Wielandt1964} shows that the $\mathbb{Q}$-vector space spanned by $\underline{T}_0,\underline{T}_1,\ldots,\underline{T}_r$  in $\mathbb{Q}R$ is a Schur
ring over $R$, which is called the \textit{\textbf{transitivity module}} of the permutation group $X$ and is usually
denoted by $V(R,G_e)$. In particular, the $V(R,G_e)$-subsets of the Schur ring $V(R,G_e)$ are a union of $G_e$-orbits.

Let $\mathcal{A}:=\langle\underline{T}_0,\ldots,\underline{T}_r\rangle$ be a Schur ring over $R$ (where $T_0,\ldots,T_r$ are the basic quantities spanning $\mathcal{A}$). The \textit{\textbf{automorphism group}} of $\mathcal{A}$ is defined by
\begin{equation}\label{eq:aut}
\mathrm{Aut}(\mathcal{A}):=\bigcap_{i=0}^r\mathrm{Aut}(\cay{R}{T_i}).
\end{equation}

Given a subset $S$ of $R$, we denote by
$$\laa \underline{S}\raa,$$
the smallest (with respect to inclusion) Schur ring containing $\underline{S}$. Now, $\laa\underline{S}\raa$ is called the \textit{\textbf{Schur ring generated}} by $\underline{S}$.

We conclude this brief introduction to Schur rings recalling~\cite[Theorem~$2.4$]{Muzychuk2003}.

\begin{prop}\label{070320a} Let $S$ be a subset of $R$. Then
$\mathrm{Aut}(\laa \underline{S}\raa)=\mathrm{Aut}(\cay{R}{S})$.
\end{prop}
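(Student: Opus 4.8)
The plan is to realise $\mathrm{Aut}(\cay{R}{S})$ as the automorphism group of its own transitivity module and then to squeeze $\mathrm{Aut}(\laa\underline{S}\raa)$ between $\mathrm{Aut}(\cay{R}{S})$ and that group. First I would set $X:=\mathrm{Aut}(\cay{R}{S})$ and recall that $R$ embeds in $X$ as a regular subgroup via its right regular representation, so that, exactly as in Section~\ref{schurnotation}, the domain of $X$ may be identified with $R$, with $X_e$ the stabiliser of the identity $e$. Let $\cA_0:=V(R,X_e)$ be the transitivity module of $X$; by Schur's theorem (\cite[Theorem~24.1]{Wielandt1964}) it is a Schur ring over $R$, and by~\eqref{eq:aut} its automorphism group is $\bigcap_i\mathrm{Aut}(\cay{R}{T_i})$, where $T_0,\dots,T_r$ are the $X_e$-orbits on $R$.

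The core of the argument is to establish the chain of inclusions
$$X\leq\mathrm{Aut}(\cA_0)\leq\mathrm{Aut}(\laa\underline{S}\raa)\leq\mathrm{Aut}(\cay{R}{S})=X,$$
which forces equality throughout and in particular yields the proposition. For the first inclusion: for each $i$ the arc set of $\cay{R}{T_i}$ is precisely the orbital of $X$ corresponding, under the Schur correspondence between suborbits and orbitals, to the suborbit $T_i$, so $X$ preserves each $\cay{R}{T_i}$ and hence $X\leq\bigcap_i\mathrm{Aut}(\cay{R}{T_i})=\mathrm{Aut}(\cA_0)$. For the middle inclusion: since $X_e$ fixes $e$ and every element of $X$ preserves the arc relation of $\cay{R}{S}$, the set $S$ (the in-neighbourhood of $e$) is a union of $X_e$-orbits, so $\underline{S}\in\cA_0$; by minimality $\laa\underline{S}\raa\subseteq\cA_0$, and a containment $\cA\subseteq\cB$ of Schur rings reverses to $\mathrm{Aut}(\cB)\leq\mathrm{Aut}(\cA)$ because each basic set of $\cA$ is a union of basic sets of $\cB$ and hence is preserved by $\mathrm{Aut}(\cB)$; applying this with $\cA=\laa\underline{S}\raa$ and $\cB=\cA_0$ gives $\mathrm{Aut}(\cA_0)\leq\mathrm{Aut}(\laa\underline{S}\raa)$. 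For the last inclusion: writing $S=\bigcup_{j\in J}T_j$ as a union of basic sets of $\laa\underline{S}\raa$, every automorphism of $\laa\underline{S}\raa$ preserves each $\cay{R}{T_j}$ and therefore preserves their union $\cay{R}{S}$, so $\mathrm{Aut}(\laa\underline{S}\raa)\leq\mathrm{Aut}(\cay{R}{S})$.

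The only genuinely delicate points are the reversal lemma for Schur-ring containments (which I would state and prove once, as it is used in the middle step) and the fact that $\mathrm{Aut}(\cay{R}{S})$ coincides with the automorphism group of the transitivity module it generates; both rest on the definition~\eqref{eq:aut} of $\mathrm{Aut}(\cA)$ as an intersection of automorphism groups of Cayley digraphs, so no computation is involved. Everything else — identifying the domain with $R$, noting that $S$ is a union of $X_e$-orbits, and the bookkeeping with basic sets — is a routine unwinding of the definitions from Section~\ref{schurnotation} and the definition of a Schur ring.
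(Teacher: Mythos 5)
Your argument is correct. Note that the paper contains no proof of this proposition at all: it is recalled from Muzychuk's paper (cited as Theorem~2.4 there), so there is no in-paper argument to compare against, and your write-up in effect supplies the standard proof of that cited result. The sandwich $X\leq \mathrm{Aut}(V(R,X_e))\leq \mathrm{Aut}(\laa\und{S}\raa)\leq \mathrm{Aut}(\cay{R}{S})=X$ with $X:=\mathrm{Aut}(\cay{R}{S})$ is sound, and the three facts it rests on are all correct and justified as you indicate: the orbital digraphs of $X$ are precisely the $\cay{R}{T_i}$ for the $X_e$-orbits $T_i$ (so $X$ lies in the automorphism group of its transitivity module), $S$ is a union of $X_e$-orbits, whence $\und{S}\in V(R,X_e)$ and $\laa\und{S}\raa\subseteq V(R,X_e)$ by minimality, and an inclusion of Schur rings reverses to an inclusion of automorphism groups because each basic set of the smaller ring is a union of basic sets of the larger (simple quantities have $0$--$1$ coefficients), so its Cayley digraph is preserved.
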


\subsection{The group $G$ is the automorphism group of a single (di)graph}\label{sec:abla}
%The main result of this section is formulated as follows.
It was shown above that the group $G$ is 2-closed, i.e. it is the automorphism of a coloured digraph. In this section we give a Cayley digraph $\cay{H}{T}$ having automorphism group  $G$. To build such a digraph it is sufficient to find a subset $T\subseteq H$ such that  $\laa \underline{T}\raa = V(H,G_e)$ (Proposition~\ref{070320a}). Such a set is constructed in Proposition~\ref{050320a}. Note that $T$ is symmetric for $q\ge 7$, so the digraph $\cay{H}{T}$ is undirected. The cases of $q=3,5$ are exceptional, because in those cases no inverse-closed subset of $H$ has the required property.
\begin{prop}\label{050320a}  Let $q$ be prime, and
\[T:=
\begin{cases}
P_0\cup P_1 \cup P_x\cup C_1\cup C_{-1}&\textrm{where } x\in \mathbb{F}\hbox{ with }x\not\in\{0,\pm 1,\pm 2,\frac{1}{2}\}\hbox{ and }x^6\neq 1,\\
& \hbox{when }q>7,\\
P_0\cup P_1\cup P_3\cup C_1\cup C_{-1}&\textrm{when }q=7,\\
S_1\cup P_0 &\textrm{when }q=5,\\
S_1\cup P_0 &\textrm{when }q=3.
\end{cases}
\]
Then $\laa \und{T}\raa = V(H,G_e)$. In particular, $T$ is not a (D)CI-subset of $H$.
\end{prop}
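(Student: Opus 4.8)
The plan is to show, for each value of $q$, that the Schur ring $\laa\und{T}\raa$ generated by the prescribed set $T$ already contains every basic quantity of the transitivity module $V(H,G_e)$, i.e. every singleton orbit $S_t$, every coset orbit $C_t\cup C_{-t}$, and every parabolic orbit $P_t$ (as described in Lemma~\ref{260220b}). Since $\laa\und{T}\raa\subseteq V(H,G_e)$ always holds (the orbits of $G_e$ span a Schur ring containing $\und{T}$), equality follows once all basic quantities are shown to lie in $\laa\und{T}\raa$. The two tools I would use repeatedly are: (i) the Schur--Wielandt principle, which lets me extract from any element of the Schur ring the simple quantity supported on the coefficient-$q$ part; and (ii) closure under multiplication in $\Q H$ together with the product formulas~\eqref{eq2}. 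A convenient bookkeeping device is the isomorphism $P_0\to\F^+$, $(x^2,2x)\mapsto 2x$ from Section~\ref{sec:3}: products like $\und{P_a}\cdot\und{P_b}$, $\und{P_a}\cdot\und{C_t}$, and $\und{C_s}\cdot\und{C_t}$ can be computed in these linear coordinates, and the resulting coefficient functions (which count representations) will be non-constant precisely because of the genericity conditions imposed on $x$.

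For the generic case $q>7$, I would proceed as follows. First, using~\eqref{eq2} one computes that $\und{P_a}\circ$-type products and ordinary products of the form $\und{P_a}\cdot\und{P_b}^{(-1)}$ land inside the span of the coset orbits and singletons, and by varying over $a,b\in\{0,1,x\}$ and applying Schur--Wielandt one recovers individual coset orbits $C_t\cup C_{-t}$ and then, by further products, all singleton orbits $S_t$; here the condition $x\notin\{0,\pm1,\pm2,\tfrac12\}$ guarantees that the differences $a-b$ occurring are pairwise non-associate so that the extracted sets are genuinely new basic quantities rather than sums of several. Second, once all coset orbits and all singletons are available, products of a coset orbit with $\und{P_0}$ (again via the $\F^+$-model, as in the proof of Proposition~\ref{290220a}) produce parabolic orbits $P_t$ with $t$ ranging over a set that, thanks to the extra hypothesis $x^6\neq1$ (which rules out the degenerate coincidences among the quadratic shifts $t = a - \tfrac{(\cdot)^2}{4}$), sweeps out all of $\F$; applying Schur--Wielandt once more isolates each $P_t$. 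At that point every basic quantity of $V(H,G_e)$ lies in $\laa\und{T}\raa$, so the two Schur rings coincide.

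For the small cases $q=3,5,7$ I would argue separately. For $q=7$ the same scheme works with the explicit choice $\{0,1,3\}$ in place of $\{0,1,x\}$, one just checks by hand that the finitely many difference- and shift-conditions needed above are satisfied. For $q=3$ and $q=5$, where $T=S_1\cup P_0$, the ring $\laa\und{S_1}\raa$ already contains the whole cyclic shift structure of the singleton orbits (multiplying $\und{S_1}$ by itself and using Schur--Wielandt generates all $\und{S_t}$), and then $\und{P_0}\cdot\und{S_t}$, $\und{P_0}\cdot\und{P_0}$ recover the remaining parabolic orbits and the single coset orbit; this is a short finite verification, which is presumably where the cited \texttt{magma} computation for $q=3$ in Proposition~\ref{26020aaa} could equally be invoked. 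Finally, the ``in particular'' clause is immediate: by Proposition~\ref{070320a}, $\Aut(\cay{H}{T})=\Aut(\laa\und{T}\raa)=\Aut(V(H,G_e))=G$ (using Proposition~\ref{26020aaa}), and since $H$ and $K$ are non-conjugate regular subgroups of $G$ (Proposition~\ref{260220aa}), Babai's Lemma~\ref{2} shows $\cay{H}{T}$ is not a (D)CI-graph, so $T$ is not a (D)CI-subset.

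The main obstacle I expect is the second step of the generic case: verifying that the parabolic orbits obtained as $\und{(C_t\cup C_{-t})}\cdot\und{P_0}$ really do cover all of $\F$, since the index $t$ of the output appears through a quadratic expression and one must ensure — using precisely the conditions $x\notin\{0,\pm1,\pm2,\tfrac12\}$ and $x^6\neq1$ — that no systematic collapsing occurs and that the coefficient functions are non-constant so that Schur--Wielandt genuinely separates the individual $P_t$'s. Pinning down exactly which arithmetic conditions on $x$ are forced is the delicate part; everything else is routine manipulation with~\eqref{eq2} and the $\F^+$-coordinatisation of $P_0$.
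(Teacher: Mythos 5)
Your general frame (show every basic quantity of $V(H,G_e)$ lies in $\cT:=\laa\und{T}\raa$, using Schur--Wielandt and the multiplication rules) is the right one, but the plan has a genuine gap exactly where the real work lies: you argue with the individual quantities $\und{P_0},\und{P_1},\und{P_x}$ (and with individual coset orbits) as if they were already available, whereas $\cT$ initially contains only the single element $\und{T}=\und{P_0}+\und{P_1}+\und{P_x}+\und{C_1}+\und{C_{-1}}$. Splitting this sum into its constituents is the heart of the problem and your sketch never addresses it. Moreover, even granting the individual $\und{P_a}$, the products you propose do not do what you claim: by the multiplication table, $\und{P_a}\,\und{P_b}=q\und{S_{b-a}}+\und{H_1\setminus H_2}$, so the coset part is always the full $\und{H_1\setminus H_2}$ and no individual coset orbit is ever distinguished this way; and in $\und{T}^2$ the singleton contributions occur with coefficients symmetric under $t\mapsto -t$, so Schur--Wielandt can only extract the symmetric block $\und{S_1}+\und{S_{-1}}+\und{S_x}+\und{S_{-x}}+\und{S_{1-x}}+\und{S_{x-1}}$, never the individual $\und{S_t}$. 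The paper's proof handles both issues explicitly: it squares $\und{T}$, extracts $\und{H\setminus H_1}$, $\und{H_2}$, $\und{C_2}+\und{C_{-2}}$ and the sextet above by Schur--Wielandt, recovers $\und{C_1}+\und{C_{-1}}$ and $\und{P_0}+\und{P_1}+\und{P_x}$ by Schur--Hadamard multiplication with $\und{H_1}$ and its complement, and then splits off $\und{P_x}$ via the identity $\bigl((\und{P_0}+\und{P_1}+\und{P_x})(\und{S_1}+\und{S_{-1}})\bigr)\circ(\und{P_0}+\und{P_1}+\und{P_x})=\und{P_0}+\und{P_1}$.

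The second missing idea is how the singletons are separated, which is also where the hypotheses really enter. The paper passes to $\cT_{H_2}:=\cT\cap\Q H_2$, a Schur ring over $H_2\cong\Z_q$, and invokes the classification of Schur rings over a cyclic group of prime order: each is the orbit ring of a subgroup $M\leq\Z_q^{*}$. The condition $x^6\neq 1$ is used precisely to rule out $|M|\in\{3,6\}$ (otherwise $\{\pm1,\pm x,\pm(1-x)\}$ would be a subgroup of $\Z_q^{*}$), giving $\und{S_1}+\und{S_{-1}}\in\cT$; later, extracting $\und{S_x}+\und{S_{x-1}}$ from $(\und{P_0}+\und{P_1})\und{P_x}$ and using $x\neq\frac12$ forces $M=\{1\}$, hence all $\und{S_i}\in\cT$, after which the ``seed'' computation (one nonzero singleton, one coset pair, one parabolic generate $V(H,G_e)$) finishes the argument. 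Your reading of $x^6\neq1$ as preventing coincidences among quadratic shifts is not its actual role, and without the $\Z_q$ structure theorem (or a substitute mechanism that breaks the $\pm$ symmetry) your plan stalls at the symmetric block. The remaining parts of your proposal are fine: the cases $q\leq 7$ are a finite check (done by \texttt{magma} in the paper, noting that for $q=3,5$ the quantity $\und{S_1}$ is likewise not available a priori, only $\und{S_1}+\und{P_0}$), and your deduction of the ``in particular'' clause from Propositions~\ref{070320a}, \ref{26020aaa}, \ref{260220aa} and Lemma~\ref{2} is correct.
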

\begin{proof}
When $q\le 7$, the result follows by computations with the computer algebra system \texttt{magma}. Therefore for the rest of the proof we suppose $q>7$.

According to Proposition~\ref{260220b} the basic sets of $V(H,G_e)$ are of three types: $S_a, C_b\cup C_{-b},P_c$ with $a,b,c\in\F$ and $b\neq 0$. Thus we have three types of basic quantities $\und{S_a}, \und{C_b}+\und{C_{-b}},\und{P_c}$ and $$V(H,G_e)=\langle \und{S_a}, \und{C_b}+\und{C_{-b}},\und{P_c}\,\vline\, a,b,c\in\F,b\neq 0\rangle.$$
Set
\begin{align*}
H_1&:=\{[1,\vec{v}]\mid \vec{v}\in \mathbb{F}^2\},\\
H_2&:=\{[1,(t,0)]\mid t\in \mathbb{F}\}.
\end{align*}
By~\eqref{eq2},  $H_1$ and $H_2$ are subgroups of $H$ with $|H_2|=q$, $|H_1|=q^2$ and, by Lemma~\ref{260220b}, $H_2=\cup_{t\in\mathbb{F}}S_t$.
In Table~\ref{eq:mult} we have reported the multiplication table among the basic quantities of $V(H,G_e)$: this will serve us well.
\begin{table}[ht]\label{eq:mult}
\begin{tabular}{|c||c|c|c|}
\hline
   \  & $\und{S_r}$ & $\und{C_s}$ & $\und{P_t}$\\\hline\hline
$\und{S_a}$ & $\und{S_{a+r}}$ & $\und{C_s}$ & $\und{P_{t-a}}$\\
 \ & \ & \ & \  \\\hline
$\und{C_b}$ & $\und{C_b}$ &
$\begin{cases}
q\und{C_{b+s}} & \textrm{if }b+s\neq 0\\
q\und{H_2} &\textrm{if } b+s=0
\end{cases}
$ & $\und{H\setminus H_1}$\\
 \ & \ & \ & \  \\\hline
$\und{P_c}$ & $\und{P_{c+r}}$ & $\und{H\setminus H_1}$ & $q \und{S_{-c+t}} + \und{H_1\setminus H_2}$\\
 \ & \ & \ & \  \\\hline
\end{tabular}\caption{Multiplication table for the basic quantities of $V(H,G_e)$}
\end{table}

Fix $a,b,c\in \mathbb{F}$ with $b,c\ne 0$ and let $\mathcal{A}$ be the smallest Schur ring of the group algebra $\mathbb{Q}H$ containing $\und{P_a},\und{C_b}+\und{C_{-b}},\und{S_c}$. We claim that
\begin{equation}\label{eq:100}
\mathcal{A}=V(H,G_e).
\end{equation}
Clearly, $\mathcal{A}\le V(H,G_e)$. From Table~\ref{eq:mult}, for every $k\in \{0,\ldots,q-1\}$, we have $\und{S_c}^k=\und{S_{ck}}$ and hence $\und{S_{ck}}\in \cA$. As $c\ne 0$, $\und{S_i}\in\cA$, for each $i\in\{0,\ldots,q-1\}$. Now, as $\und{P_a}\in \cA$, from Table~\ref{eq:mult}, we have $\und{P_{a}}\cdot\und{S_i} =\und{P_{a+i}}\in\cA$ for any $i\in\{0,\ldots,q-1\}$.
The equality $(\und{C_b}+\und{C_{-b}})^2 = 2q\und{H_2} + q\und{C_{2b}}+q\und{C_{-2b}}$ implies $\und{C_{2b}}+\und{C_{-2b}}\in\cA$. Now arguing  inductively we deduce $\und{C_k}+\und{C_{-k}}\in\cA$, for all $k\in\{1,\ldots,q-1\}$. Thus~\eqref{eq:100} follows.

Let $x\in \mathbb{F}$ with $x\not\in\{0,\pm 1,\pm 2,\frac{1}{2}\}$ and $x^6\neq 1$, let $T:=P_0\cup P_1 \cup P_x\cup C_1\cup C_{-1}$ and let $\cT:=\laa\und{T}\raa$. (The existence of $x$ is guaranteed by the fact that $q>7$.) We claim that
\begin{equation}\label{eq:101}
\und{H_2},\,\und{H_1},\,\und{C_2}+\und{C_{-2}},\,\und{S_1}+\und{S_{-1}}+\und{S_x}+\und{S_{-x}} +\und{S_{1-x}}+\und{S_{x-1}}\in \cT.
\end{equation}
Using Table~\ref{eq:mult} for squaring $\und{T}$, we obtain (after rearranging the terms):
\begin{align*}
\und{T}^2 =& 3q\und{S_0}+q\und{S_1}+q\und{S_{-1}}+q\und{S_x}+q\und{S_{-x}} +q\und{S_{1-x}}+q\und{S_{x-1}}\\
& +9\und{H_1\setminus H_2} +12\und{H\setminus H_1} +q\und{C_{2}}+q\und{C_{-2}}+2q\und{H_2}.
\end{align*}
From the assumptions on $x$, the elements $-1,1,-x,x,-(x-1),x-1$ are pairwise distinct.
Therefore
\begin{align*}
\und{T}^2\circ \und{S_b} &=
\begin{cases}
5q\und{S_0}, & b=0,\\
3q\und{S_b}, &\mathrm{if }\, b\in \{\pm 1,\pm x,\pm (x-1)\},\\
2q\und{S_b}, &\mathrm{if }\, b\not\in \{0,\pm 1,\pm x,\pm (x-1)\},
\end{cases}\\
\und{T}^2\circ \und{C_b} &=
\begin{cases}
(q+9)\und{C_b}, &\textrm{if } b\in \{\pm 2\},\\
9\und{C_b}, &\textrm{if } b\not\in \{0,\pm 2\},
\end{cases}\\
\und{T}^2\circ \und{P_b} &= 12 \und{P_b},\quad \textrm{if } b\in\F.
\end{align*}
Since the numbers $6,9,q+9,2q,3q,5q$ are also pairwise distinct (because $q\ne 3$),  an application of the Schur-Wielandt principle yields
\begin{align*}
 (\und{T}^2)_{3q}& = \und{S_1}+\und{S_{-1}}+\und{S_x}+\und{S_{-x}} +\und{S_{1-x}}+\und{S_{x-1}}\in\cT,\\
(\und{T}^2)_{12} &=\und{H\setminus H_1}\in\cT,\\
(\und{T}^2)_{2q} &= \und{H_2} - (\und{S_0} + \und{S_1}+\und{S_{-1}}+\und{S_x}+\und{S_{-x}} +\und{S_{1-x}}+\und{S_{x-1}})\in\cT,\\
(\und{T}^2)_{q+9} &= \und{C_{2}}+\und{C_{-2}}\in\cT.
\end{align*}
From this,~\eqref{eq:101} immediately follows.

We claim  that
\begin{equation}\label{eq:102}\und{S_{1}}+\und{S_{-1}}\in\cT.
\end{equation}
Let $$\cT_{H_2}:=\cT\cap \mathbb{Q}H_2$$
and observe that $\cT_{H_2}$ is a Schur ring over the cyclic group $H_2\cong\mathbb{Z}_q$ of prime order $q$. It is well known that every Schur ring over $\mathbb{Z}_q$ is determined by a subgroup $M\leq\mathrm{Aut}(\Z_q)\cong \mathbb{Z}_q^*$ such that, every  basic set of the corresponding Schur ring is a union of $M$-orbits. This implies that there exists a subgroup $M$ of $\mathrm{Aut}(H_2)$ such that every $\cT_{H_2}$-subset of $\cT_{H_2}$ is a union of $M$-orbits. From~\eqref{eq:101},
the simple quantity $\und{S_1}+\und{S_{-1}}+\und{S_x}+\und{S_{-x}} +\und{S_{1-x}}+\und{S_{x-1}}$ belongs to $\cT_{H_2}$ and hence  $\{\pm 1,\pm x,\pm (1-x)\}$ is a $\cT_{H_2}$-subset of cardinality $6$. It follows that $|M|$ divides six and $M\subseteq \{\pm 1,\pm x,\pm (1-x)\}$. If $|M|\in \{3,6\}$, then  $\{\pm 1,\pm x,\pm (1-x)\}$ is a subgroup of $\Z_q^*$, contrary to the assumption $x^6\neq 1$. Therefore
\begin{equation}\label{eq:1000}
\textrm{either }M=\{1\}\textrm{ or }|M|=\{\pm 1\}.
\end{equation} In both cases, $\{-1,1\}$ is a union of $M$-orbits. Therefore, $\und{S_1} +  \und{S_{-1}}\in \cT_{H_2}$. From this,~\eqref{eq:102} follows immediately.

We are now ready to conclude the proof. Clearly, $\und{T}\in  V(H,G_e)$ and hence $\cT\subseteq V(H,G_e)$.
From~\eqref{eq:101}, $\und{H_1}\in\cT$ and, from~\eqref{eq:102}, $\und{S_1}+\und{S_{-1}}\in \cT$. Therefore $\und{H_1}\circ \und{T} = \und{C_1}+\und{C_{-1}}\in\cT$ and $(\und{T} - \und{H_1})\circ \und{T} = \und{P_0}+\und{P_1}+\und{P_x}\in\cT$. Therefore
$$
\left((\und{P_0}+\und{P_1}+\und{P_x}) (\und{S_1}+\und{S_{-1}})\right)\circ (\und{P_0}+\und{P_1}+\und{P_x}) \in\cT.$$
As
$(\und{P_0}+\und{P_1}+\und{P_x}) (\und{S_1}+\und{S_{-1}})=
\und{P_1}+\und{P_2}+\und{P_{x+1}}+\und{P_{-1}}+\und{P_0}+\und{P_{x-1}}$,
we deduce
$$\left((\und{P_0}+\und{P_1}+\und{P_x}) (\und{S_1}+\und{S_{-1}})\right)\circ (\und{P_0}+\und{P_1}+\und{P_x})=
\und{P_0}+\und{P_1}
$$
and hence $\und{P_0}+\und{P_1}\in\cT$. Therefore, $\und{P_x}=(\und{P_0}+\und{P_1}+\und{P_x})-(\und{P_0}+\und{P_1})\in \cT$.

As $$(\und{P_0}+\und{P_1}) \und{P_x} = q\und{S_x} + q\und{S_{x-1}} + 2(\und{H\setminus H_1}),$$
from the Schur-Wielandt principle, we obtain
$\und{S_x} + \und{S_{x-1}}\in \cT$. Therefore $\und{S_x} + \und{S_{x-1}}\in \cT_{H_2}$ and hence $\{x,x-1\}$ is a $\cT_{H_2}$-subset. Thus $\{x,x-1\}$ is an $M$-orbit. Recall~\eqref{eq:1000}. If $M=\{-1, 1\}$, then $x-1=-1\cdot x=-x$, contrary to the assumption $x\neq 1/2$. Therefore $M=\{1\}$ and $\cT_{H_2}={\mathbb Q}H_2$. Thus $\und{S_i}\in\cT$, for each $i\in\Z_q$.
Thus $\und{S_1},\und{P_x},\und{C_1}+\und{C_{-1}}\in\cT$ and~\eqref{eq:100} implies $V(H,G_e)\subseteq \cT$.
\end{proof}
%\misha{As a consequence of the above Proposition we obtain the following
%\begin{cor}\label{190620a} The group $\mathrm{Dih}(\Z_p^2)$ is non-CI for all primes $p\geq 5$. The group $\mathrm{Dih}(\Z_3^2)$ is CI- but not DCI-group.
%\end{cor}
%}
\section{Proof of Theorem~$\ref{thrm:main}$}

\begin{proof}
The list of candidate CI-groups is on page~323 in~\cite{Li2002}. From here, we see that, if $R$ is in this list and if
$R=\mathrm{Dih}(A)$ is generalised dihedral, then for every odd prime $p$ the Sylow $p$-subgroup of $R$ is
either elementary abelian or cyclic of order $9$.

Assume that the Sylow $p$-subgroup ($p$ is an odd prime) of $A$ is elementary abelian of rank at least $2$. Let $P\leq A$ be a subgroup isomorphic to $\Z_p^2$ and let $x\in R\setminus A$. Then
$\langle P,x\rangle\cong\mathrm{Dih}(\Z_p^2)$. By Proposition~\ref{050320a}, $\mathrm{Dih}(\Z_p^2)$ contains a non-DCI subset. Therefore $\mathrm{Dih}(\Z_p^2)$ is a non-DCI-group. Since subgroups of a (D)CI-group are also (D)CI, we conclude that $R$ is a not a DCI-group as well.  The non-DCI set $T$ constructed in Proposition~\ref{050320a} is symmetric for $p\geq 7$.
Hence $\mathrm{Dih}(\Z_p^2)$ and, therefore, $R$ are non-CI groups when $p\geq 7$. If $p=5$, then the group $\mathrm{Dih}(\Z_p^2)$ contains a non-CI subset, namely: $P_0\cup S_1\cup S_{-1}$ (this was checked by \texttt{magma}\footnote{The automorphism group of the corresponding Cayley graph is $4$ times bigger than $G$ but the subgroups $H$ and $K$ are non-conjugate inside it.}).
Combining these arguments we conclude that if $\mathrm{Dih}(A)$ is a CI-group, then its Sylow $p$-subgroup is cyclic if $p\geq 5$. If $p=3$, then the Sylow $3$-subgroup is either cyclic of order $9$ or elementary abelian. The example in Section~\ref{sec:special} shows that the rank of an elementary abelian group is bounded by $2$.
\end{proof}
%Let $p$ be an odd prime dividing the order of $R$, let $P$ be a Sylow $p$-subgroup of $R$ and let $x\in R\setminus A$. To prove Theorem~\ref{thrm:main} we may suppose that $P$ is elementary abelian.

%Now, $H:=\langle P,x\rangle$ is a generalised dihedral group over $P$; moreover, $H$ is a CI-group, because $H$ is a subgroup of the CI-group $R$. Applying Lemmas~\ref{2} and~\ref{050320a}, Equation~\eqref{eq:aut} and Proposition~\ref{26020aaa} with $q:=p$ when $p\ne 3$ and with $q:=9$ when $p=3$, we obtain that $P$ has order $p$, or $9$, or $27$. Now, the example in Section~\ref{sec:special} exclude the last case.

We now give the updated list of CI-groups.  It is a combination of the list in~\cite{Li2002}, together with our results here and \cite[Corollary 13]{Dobson2018} (note \cite[Corollary 13]{Dobson2018} contains an error, and should list $Q_8$ on line (1c), not on line (1b)).  We need to define one more group:

\begin{definition}
Let $M$ be a group of order relatively prime to $3$, and $\exp(M)$ be the largest order of any element of $M$.  Set $E(M,3) = M\rtimes_\phi\Z_3$, where $\phi(g) = g^\ell$, and $\ell$ is an integer satisfying $\ell^3\equiv 1\ ({\rm mod}\  \exp(M))$ and $\gcd(\ell(\ell-1),\exp(M)) = 1$.
\end{definition}

\begin{theorem}\label{possible CI-groups}
Let $G$, $M$, and $K$ be {\rm CI}-groups with respect to graphs such that $M$ and $K$ are abelian, all Sylow subgroups of $M$ are elementary abelian, and all Sylow subgroups of $K$ are elementary abelian of order $9$ or cyclic of prime order.
\begin{enumerate}
\item If $G$ does not contain elements of order $8$ or $9$, then $G = H_1\times H_2\times H_3$, where
the orders of $H_1$, $H_2$, and $H_3$ are pairwise relatively prime, and
\begin{enumerate}
\item\label{class 1} $H_1$ is an abelian group, and each Sylow $p$-subgroup of $H_1$ is isomorphic to $\Z_p^k$ for $k < 2p + 3$ or $\Z_4$;
\item\label{class 2} $H_2$ is isomorphic to one of the groups $E(K,2)$, $E(M,3)$, $E(K,4)$, $A_4$, or $1$;
\item\label{class 3} $H_3$ is isomorphic to one of the groups $D_{10}$, $Q_8$, or $1$.
\end{enumerate}
\item\label{class 4} If $G$ contains elements of order $8$, then $G\cong E(K,8)$ or $\Z_8$.
\item\label{class 5} If $G$ contains elements of order $9$, then $G$ is one of the groups $\Z_9\rtimes\Z_2$, $\Z_9\rtimes\Z_4$,
$\Z_2^2\rtimes\Z_9$, or $\Z_2^n\times\Z_9$, with $n\le 5$.
\end{enumerate}
\end{theorem}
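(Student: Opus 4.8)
The plan is to obtain Theorem~\ref{possible CI-groups} as a refinement of the existing classification of candidate CI-groups, rather than proving it from scratch; the theorem asserts only that every CI-group with respect to graphs has one of the listed shapes, and no converse is claimed. I would start from the list of putative CI-groups given by Li on p.~323 of~\cite{Li2002}, as corrected and sharpened in~\cite[Corollary~13]{Dobson2018} (incorporating the erratum that places $Q_8$ with the factor $H_3$ rather than with $H_2$). That list already gives the direct-product decomposition $G=H_1\times H_2\times H_3$ with pairwise coprime factor orders in the case that $G$ has no element of order $8$ or $9$, and it handles the remaining cases by the groups appearing in the order-$8$ and order-$9$ items. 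The only point where the present statement is strictly stronger is that in~\cite{Li2002,Dobson2018} the abelian group $A$ appearing inside a factor $E(A,2)$, $E(A,4)$ or $E(A,8)$ is only required to have elementary abelian Sylow subgroups, with the ranks bounded by a function of the prime rather than by a constant.

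The decisive input is Theorem~\ref{thrm:main} together with the corollary following it. A direct factor $H_2\cong E(A,2)=\mathrm{Dih}(A)$ is a subgroup of $G$, hence a CI-group since subgroups of CI-groups are CI-groups; Theorem~\ref{thrm:main} then forces the Sylow $p$-subgroup of $A$ to have order $p$ or $9$ for every odd prime $p$. For a factor of the form $E(A,4)$ or $E(A,8)$ I would instead invoke the corollary to Theorem~\ref{thrm:main}, which gives the same restriction on $A$ via the isomorphisms $E(A,4)/Z_1\cong E(A,8)/Z_2\cong\mathrm{Dih}(A)$ and the fact that a quotient of a CI-group by a characteristic subgroup is again a CI-group~\cite[Lemma~3.5]{BabaiF1978}. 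Finally, the Cayley graph of $\mathrm{Dih}(\Z_3^3)$ exhibited in Section~\ref{sec:special} shows that no CI-group can contain $\mathrm{Dih}(\Z_3^3)$, so the Sylow $3$-subgroup of such an $A$ cannot have rank exceeding $2$. Combining these two constraints, each Sylow subgroup of $A$ is elementary abelian of order $9$ or cyclic of prime order, unless the Sylow $3$-subgroup is $\Z_9$; but in that case $G$ contains an element of order $9$ and is governed by the order-$9$ item instead. Writing $K$ for such a restricted abelian group, the descriptions of $H_2$ and of the order-$8$ case collapse to exactly the groups listed there.

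It then remains to verify that nothing else in Li's list lies within the scope of Theorem~\ref{thrm:main}: the abelian factor $H_1$, the non-dihedral families $E(M,3)$, $A_4$, $Q_8$, $D_{10}$, $\Z_8$, and the order-$9$ groups either fail to be generalised dihedral or already satisfy the order-$p$-or-$9$ condition trivially (for instance $D_{10}=\mathrm{Dih}(\Z_5)$), so Theorem~\ref{thrm:main} imposes no new constraint on them; in particular the bound $n\le 5$ for $\Z_2^n\times\Z_9$ is inherited unchanged from the known CI-property of $\Z_2^n$ and is orthogonal to our results. I expect this last step of reconciling the new restriction with Li's original list and the~\cite{Dobson2018} erratum to be the only real obstacle: there is no new permutation-group theoretic content beyond Theorem~\ref{thrm:main}, but care is needed to ensure the coprimality conditions among $H_1$, $H_2$, $H_3$ and the placement of the small exceptional groups remain consistent.
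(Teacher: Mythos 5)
Your proposal matches the paper's own justification: the theorem is obtained there exactly as you describe, by taking Li's list of candidate CI-groups as corrected in \cite[Corollary 13]{Dobson2018} (with the $Q_8$ placement fixed) and imposing the new arithmetic restriction from Theorem~\ref{thrm:main}, its corollary for $E(A,4)$ and $E(A,8)$, and the $\mathrm{Dih}(\Z_3^3)$ example of Section~\ref{sec:special}, with the $\Z_9$ possibility absorbed into the order-$9$ case. No gap; this is essentially the same argument, spelled out in slightly more detail than the paper gives.
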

\noindent{{\bf Remark.} The rank bound  of an elementary abelian group used in part (1)(a) is due to \cite{Somlai2011}.

Other than positive results already mentioned, the abelian groups known to be CI-groups are $\Z_{2n}$ \cite{Muzychuk1995}, $\Z_{4n}$ \cite{Muzychuk1997}with $n$ an odd square-free integer, $\Z_q\times\Z_p^2$ \cite{KovacsM2009}, $\Z_q\times\Z_p^3$ \cite{MuzychukS2019Preprint}, and $\Z_q\times\Z_p^4$ \cite{KovacsS2019Preprint} with $q$ and $p$ and distinct primes, and $\Z_2^3\times\Z_p$ \cite{DobsonS2013}.  Additional results are given in \cite[Theorem 16]{Dobson2002} and \cite{Dobson2018a} with technical restrictions on the orders of the groups.  A similar result with technical restrictions on $M$ is given in \cite[Theorem 22]{Dobson2002} for some $E(M,3)$.  Also, $E(\Z_p,4)$ and $E(\Z_p,8)$ were shown to be CI-groups in \cite{LiLP2007}, and $Q_8\times\Z_p$ in \cite{Somlai2015}.  Finally, Holt and Royle have determined all CI-groups of order at most $47$ \cite{HoltRpreprint2018}.  Applying Theorem \ref{possible CI-groups} to determine possible CI-groups, and then checking the positive results above to see that all possible CI-groups are known to be CI-groups, we extend the census of CI-groups up to groups of order at most $59$.  We should also add that the isomorphism problem for circulant digraphs has been solved \cite{Muzychuk2004}, and a polynomial time algorithm to determine their automorphism groups has been found \cite{EvdokimovP2003}.  Finally, we remark that the groups $E(M,3)$ and $E(M,8)$ are {\it not} DCI-groups.

\section{Appendix: an alternative approach}
In this section we give an alternative approach to the proof of Theorem~\ref{thrm:main}. We do not give all of the details - just the basic idea. In principle, this section is independent from the previous sections, but for convenience we deduce the main result from our previous work.

For each $g\in \GL_3(\F)$, let $g^\top$ denote the transpose of the matrix $g$ and let $g^\iota:=(g^{-1})^\top$. It is easy to verify that $\iota:\GL_3(\F)\to \GL_3(\F)$ is an automorphism. Let $$s=\begin{pmatrix} 0&0&1\\0&1&0\\1&0&0\end{pmatrix}$$ and let $\alpha$ be the automorphism of $\GL_3(\F)$ defined by
\begin{equation}\label{def:alpha}g^\alpha := s^{-1}g^\iota s=s^{-1} (g^{-1})^{\top} s,
\end{equation}
for every $g\in \GL_3(\F)$.

We now define $\hat{\alpha}\in \mathrm{Sym}(H)$ by
\begin{equation}\label{def:hatalpha}
[a,(x,y)]^{\hat{\alpha}} = [a,(y^2/2-x,ay)],
\end{equation}
for every $[a,(x,y)]\in H$.
\begin{lemma}\label{100620b} Let $\alpha$ and $\hat{\alpha}$ be as in~$\eqref{def:alpha}$ and~$\eqref{def:hatalpha}$. We have
\begin{enumerate}
\item\label{eq:partpar1} $G^\alpha=G$ and $D^\alpha=D$;
\item\label{eq:partpar2} $K=H^\alpha$ and $H=K^\alpha$;
\item\label{eq:partpar3} for every $h\in H$, $(Dh)^\alpha=Dh^{\hat{\alpha}}$;
\item\label{eq:partpar4} for every $x\in \F$ and for every $t\in \F^\ast$, $S_x^{\hat{\alpha}}=S_{-x},C_{t}^{\hat{\alpha}} = C_{t},P_x^{\hat{\alpha}} = P_{-x}$.
\end{enumerate}
\end{lemma}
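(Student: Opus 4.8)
The plan is to verify each of the four assertions by direct computation with $3\times 3$ matrices, exploiting the explicit formula~\eqref{def:alpha} for $\alpha$ and the parametrisations of $D$, $H$, $K$ given in Section~\ref{sec:2}. First I would record that $\iota$ is an anti-isomorphism composed with inversion, hence an automorphism, and that conjugation by the permutation matrix $s$ (which reverses the order of the coordinate basis) is also an automorphism; so $\alpha$ is an automorphism of $\GL_3(\F)$, and it clearly has order $2$ since $s^2=1$ and $\iota^2=\mathrm{id}$. For part~\eqref{eq:partpar1}, I would take a general element of $G$ (an upper triangular matrix with $\pm 1$ on the diagonal, product of the diagonal entries equal to $1$), compute its transpose-inverse, and conjugate by $s$; since transposing turns upper triangular into lower triangular and conjugating by $s$ flips a matrix about the anti-diagonal, the composite sends upper triangular back to upper triangular, preserves the diagonal entry conditions (up to reordering $a,b,c\mapsto c,b,a$, which still multiplies to $1$), and preserves the $\F$-entries off the diagonal; hence $G^\alpha=G$. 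Restricting to $D$, whose matrices have the special form $\begin{pmatrix} a&ax&ax^2/2\\0&1&x\\0&0&a\end{pmatrix}$, one checks that the image again has this shape with $x\mapsto -x$ (the sign is forced by the characteristic being odd), so $D^\alpha=D$.

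For part~\eqref{eq:partpar2}, I would apply $\alpha$ to the generic matrix of $H$, namely $\begin{pmatrix} a&0&x\\0&a&y\\0&0&1\end{pmatrix}$: its inverse is $\begin{pmatrix} a&0&-ax\\0&a&-ay\\0&0&1\end{pmatrix}$ (using $a^2=1$), its transpose is $\begin{pmatrix} a&0&0\\0&a&0\\-ax&-ay&1\end{pmatrix}$, and conjugating by $s$ flips the entries about the anti-diagonal, landing in $\begin{pmatrix} 1&*&*\\0&a&0\\0&0&a\end{pmatrix}$, which is exactly the shape defining $K$; matching coefficients gives the explicit bijection $H\to K$. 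Since $\alpha$ has order $2$ and swaps the two shapes, $K^\alpha=H$ as well. For part~\eqref{eq:partpar3}, I would use that $\alpha$ fixes $D$ setwise (part~\eqref{eq:partpar1}), so it induces a well-defined permutation of the coset space $D\backslash G$, i.e. $(Dg)^\alpha:=Dg^\alpha$ is well-defined; then for $h\in H$ I must identify the unique element of $H$ representing $Dh^\alpha$ (recall $G=DH$ with $D\cap H=1$ from Proposition~\ref{260220a}, so $H$ is a transversal). Concretely, $h^\alpha\in K$, and I would left-multiply by the appropriate element of $D$ to bring $h^\alpha$ back into $H$; carrying out this normalisation with the pair notation $[a,(x,y)]$ and the product rule~\eqref{eq2} is the computation that produces the formula $[a,(x,y)]^{\hat\alpha}=[a,(y^2/2-x,ay)]$ in~\eqref{def:hatalpha}. (Equivalently, one checks directly that the permutation $\hat\alpha$ of $H$ defined by~\eqref{def:hatalpha} satisfies $Dh^{\hat\alpha}=(Dh)^\alpha$, using the Schur identification of Section~\ref{schurnotation}.)

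Finally, part~\eqref{eq:partpar4} is a routine verification once~\eqref{def:hatalpha} is in hand: I would substitute the parametrisations from Lemma~\ref{260220b} into the formula for $\hat\alpha$. For a singleton orbit element $[1,(t,0)]$, we get $[1,(0^2/2-t,0)]=[1,(-t,0)]$, so $S_t^{\hat\alpha}=S_{-t}$. For a coset orbit element $[1,(z,t)]$ with $t\ne 0$, we get $[1,(t^2/2-z,t)]$, and as $z$ ranges over $\F$ so does $t^2/2-z$, hence $C_t^{\hat\alpha}=C_t$ (and then $C_{-t}^{\hat\alpha}=C_{-t}$). For a parabolic orbit element $[-1,(t+z^2,2z)]$, we get $[-1,((2z)^2/2-(t+z^2),(-1)\cdot 2z)]=[-1,(z^2-t,-2z)]=[-1,(-t+(-z)^2,2(-z))]$, which as $z$ ranges over $\F$ describes exactly $P_{-t}$, so $P_t^{\hat\alpha}=P_{-t}$. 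The only genuine obstacle is the normalisation step in part~\eqref{eq:partpar3}: one must be careful to track the $D$-component correctly when moving $h^\alpha$ from $K$ back into $H$, and to use the characteristic-odd hypothesis where the halving $y^2/2$ and the sign $(1-a)/2$ appear; everything else is bookkeeping with $2\times 2$-block-free $3\times 3$ matrices.
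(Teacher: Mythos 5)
Your proposal is correct and follows essentially the same route as the paper: direct matrix computations show $D^\alpha=D$, $G^\alpha=G$, $H^\alpha=K$, $K^\alpha=H$, and the coset statement in part~\eqref{eq:partpar3} is exactly the paper's verification that $h^\alpha(h^{\hat\alpha})^{-1}\in D$ (your ``normalise $h^\alpha$ back into $H$'' is the same computation), after which part~\eqref{eq:partpar4} is the substitution into~\eqref{def:hatalpha} that you carry out and the paper leaves as immediate. The only cosmetic difference is your use of the fact that $\alpha$ has order $2$ to get $K^\alpha=H$ from $H^\alpha=K$, where the paper simply repeats the analogous computation.
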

\begin{proof}
The proof follows from straightforward computations. For every $a\in \{-1,1\}$ and $x\in \F$, we have
\begin{align*}
\begin{pmatrix}
a&ax&ax^2/2\\
0&1&x\\
0&0&a
\end{pmatrix}^\alpha&=
\begin{pmatrix}
0&0&1\\
0&1&0\\
1&0&0
\end{pmatrix}
\left(\begin{pmatrix}
a&ax&ax^2/2\\
0&1&x\\
0&0&a
\end{pmatrix}^{-1}\right)^\top
\begin{pmatrix}
0&0&1\\
0&1&0\\
1&0&0
\end{pmatrix}\\
&=\begin{pmatrix}
0&0&1\\
0&1&0\\
1&0&0
\end{pmatrix}
\begin{pmatrix}
a&-x&a(-x)^2/2\\
0&1&a(-x)\\
0&0&a
\end{pmatrix}^\top
\begin{pmatrix}
0&0&1\\
0&1&0\\
1&0&0
\end{pmatrix}\\
&=\begin{pmatrix}
0&0&1\\
0&1&0\\
1&0&0
\end{pmatrix}
\begin{pmatrix}
a&0&0\\
-x&1&0\\
a(-x)^2/2&a(-x)&a
\end{pmatrix}
\begin{pmatrix}
0&0&1\\
0&1&0\\
1&0&0
\end{pmatrix}\\
&=
\begin{pmatrix}
a&a(-x)&a(-x)^2/2\\
0&1&-x\\
0&0&a
\end{pmatrix}\in D.
\end{align*}
This shows $D^\alpha=D$. The computations for proving $G=G^\alpha$, $K=H^\alpha$ and $H=K^\alpha$ are similar.

Let $h:=[a,(x,y)]\in H$. A direct computation shows that
$$h^\alpha=\begin{pmatrix}a&0&x\\0&a&y\\0&0&1\end{pmatrix}^\alpha=\begin{pmatrix}1&-ay&-ax\\0&a&0\\0&0&a\end{pmatrix}$$
and hence
\begin{align*}
h^\alpha(h^{\hat{\alpha}})^{-1}&=\begin{pmatrix}1&-ay&-ax\\0&a&0\\0&0&a\end{pmatrix}\left(\begin{pmatrix}a&0&y^2/2-x\\0&a&ay\\0&0&1\end{pmatrix}\right)^{-1}\\
&=\begin{pmatrix}1&-ay&-ax\\0&a&0\\0&0&a\end{pmatrix}
\begin{pmatrix}a&0&-ay^2/2+ax\\0&a&-y\\0&0&1\end{pmatrix}\\
&=
\begin{pmatrix}
a&-y&ay^2/2\\
0&1&-ay\\
0&0&a
\end{pmatrix}\in D.
\end{align*}
Therefore $$(Dh)^\alpha=D^\alpha h^\alpha=Dh^\alpha=D h^{\hat{\alpha}}$$
and part~\eqref{eq:partpar3} follows.  Now, part~\eqref{eq:partpar4} follows immediately from Lemma~\ref{260220b} and part~\eqref{eq:partpar3}.
\end{proof}

\begin{lemma}\label{100620a} Let $x\in \mathbb{F}$ with $x\not\in\{0,\pm 1,\pm 2,\frac{1}{2}\}$ and $x^6\neq 1$, and let
\begin{align*}
T&:=P_0\cup P_1 \cup P_x\cup C_1\cup C_{-1},\\
T'&:=P_0\cup P_{-1} \cup P_{-x}\cup C_1\cup C_{-1}.
\end{align*}
Then $\cay{H}{T}$ and $\cay{H}{T'}$ are isomorphic but not Cayley isomorphic. In particular, $H$ is not a $\mathrm{CI}$-group.
\end{lemma}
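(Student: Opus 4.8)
The plan is to exhibit an explicit isomorphism $\cay{H}{T}\to\cay{H}{T'}$ that is \emph{not} induced by an automorphism of $H$, and then invoke Babai's criterion (Lemma~\ref{2}) together with the $2$-closure computations of the previous sections. The natural candidate for the isomorphism is the permutation $\hat\alpha\in\mathrm{Sym}(H)$ of~\eqref{def:hatalpha}. First I would observe, using Lemma~\ref{100620b}\eqref{eq:partpar4}, that $T^{\hat\alpha}=P_0^{\hat\alpha}\cup P_1^{\hat\alpha}\cup P_x^{\hat\alpha}\cup C_1^{\hat\alpha}\cup C_{-1}^{\hat\alpha}=P_0\cup P_{-1}\cup P_{-x}\cup C_1\cup C_{-1}=T'$. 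So $\hat\alpha$ carries the connection set $T$ to $T'$. The point is that $\hat\alpha$ is \emph{not} a group homomorphism of $H$: indeed $\hat\alpha$ comes from the field automorphism $\alpha$ conjugating through the coset space $D\backslash G$ (Lemma~\ref{100620b}\eqref{eq:partpar3}), which swaps $H$ and $K$ (part~\eqref{eq:partpar2}), and so it is a Cayley-type map with respect to $K$, not with respect to $H$. Concretely $\hat\alpha$ conjugates the right regular representation of $H$ on itself into the regular representation of $K$ on $D\backslash G\cong H$.

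Next I would verify that $\hat\alpha$ is genuinely a graph isomorphism $\cay{H}{T}\to\cay{H}{T'}$, not merely a bijection sending $T$ to $T'$. The clean way is to pass to the Schur-ring / transitivity-module picture. By Proposition~\ref{050320a}, $\laa\underline T\raa=V(H,G_e)$, so $\mathrm{Aut}(\cay{H}{T})=\mathrm{Aut}(V(H,G_e))=G$ by Proposition~\ref{070320a} (and Proposition~\ref{26020aaa} gives that this $G$ is exactly the $2$-closure, i.e.\ $G$ acting on $H$ is $2$-closed). Since $\alpha$ is an automorphism of $\GL_3(\F)$ fixing $G$ and $D$ setwise (Lemma~\ref{100620b}\eqref{eq:partpar1}), it induces, via the coset action $D\backslash G$, a permutation of $H$ that maps the $G_e$-orbits to $G_e$-orbits and hence is an automorphism of the colour digraph $V(H,G_e)$ as an abstract combinatorial object — but one that intertwines the two $G$-actions twisted by $\alpha$. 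Thus $\hat\alpha$ maps $\cay{H}{T}$ isomorphically onto $\cay{H}{T^{\hat\alpha}}=\cay{H}{T'}$; this is really just the statement that $\alpha\in\mathrm{Aut}(G)$ induces an isomorphism between a Cayley object and its $\alpha$-image.

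Finally I would argue that no $\varphi\in\mathrm{Aut}(H)$ sends $T$ to $T'$. Suppose such a $\varphi$ existed. Then $\cay{H}{T}\cong\cay{H}{T'}$ via $\varphi$, and combining $\varphi$ with $\hat\alpha^{-1}$ we would get an isomorphism $\cay{H}{T}\to\cay{H}{T}$ moving the regular subgroup $H\le\mathrm{Aut}(\cay{H}{T})=G$ to the regular subgroup $K$; but $H$ and $K$ are \emph{normal} in $G$ and distinct (Proposition~\ref{260220aa}), hence lie in different $G$-conjugacy classes, so by Lemma~\ref{2} the graph $\cay{H}{T}$ is not a DCI-graph with the two regular $H$-subgroups $H$ and $K$ being non-conjugate — contradiction. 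Therefore $\cay{H}{T}$ and $\cay{H}{T'}$ are isomorphic but not Cayley isomorphic, and $H\cong\mathrm{Dih}(\F\oplus\F)$ is not a CI-group. The main obstacle is bookkeeping: one must be careful that $K$, viewed inside $\mathrm{Aut}(\cay{H}{T})$ under the Schur identification $D\backslash G\cong H$, really is a regular subgroup isomorphic to $H$ (both are $\mathrm{Dih}(\F\oplus\F)$ by Proposition~\ref{260220a}) and that $\hat\alpha$ conjugates the copy $H_{\mathrm{right}}$ to this copy $K$ — this is exactly content packaged in Lemma~\ref{100620b}\eqref{eq:partpar2}--\eqref{eq:partpar3}, so the argument reduces to assembling those pieces.
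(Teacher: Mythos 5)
Your proposal is correct and follows essentially the same route as the paper: use $\hat{\alpha}$ (which normalizes $G$, fixes $e$, and sends $T$ to $T'$ by Lemma~\ref{100620b}\eqref{eq:partpar4}) as the isomorphism, then combine a hypothetical $\varphi\in\aut{H}$ with $\hat{\alpha}$ to land in $\mathrm{Aut}(\cay{H}{T})=G$ via Propositions~\ref{050320a} and~\ref{070320a}, and contradict Lemma~\ref{100620b}\eqref{eq:partpar2}. The only cosmetic difference is your closing appeal to Lemma~\ref{2} and the phrase ``not a DCI-graph --- contradiction,'' which is superfluous: the actual contradiction, which you do identify, is that an element of $G$ (all of whose elements normalize the normal subgroup $H$) would conjugate $H$ onto $K\neq H$, exactly as in the paper's proof.
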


\begin{proof}We view $G$ as a permutation group on $D\backslash G$, which we may identify with $H$ via the Schur notation.

It follows from Lemma~\ref{100620b}~\eqref{eq:partpar1} and~\eqref{eq:partpar3} that $\hat{\alpha}$ normalizes $G$. Therefore, $\hat{\alpha}$ permutes the orbitals of $G$.
Since $\hat{\alpha}$ fixes $e=[1,(0,0)]$, $\hat{\alpha}$ permutes the suborbits of $G$ and, from Lemma~\ref{100620b}~\eqref{eq:partpar4}, we have
$\cay{H}{T^{\hat{\alpha}}}=\cay{H}{T'}$. Hence $\cay{H}{T}^{\hat{\alpha}}=\cay{H}{T'}$ and $\cay{H}{T}\cong \cay{H}{T'}$.

Assume that there exists $\beta\in\aut{H}$ with $\cay{H}{T}^\beta = \cay{H}{T'}$. Then $\hat{\alpha}\beta^{-1}$ is an automorphism of $\cay{H}{T}$.
It follows from Propositions~\ref{070320a} and \ref{050320a} that $\hat{\alpha}\beta^{-1}\in\mathrm{Aut}(\cay{H}{T})=G$. Therefore $\hat{\alpha}\in G\beta$. Since $G$ and $\beta$ normalize $H$, so does $\alpha$. However, this contradicts Lemma~\ref{100620b}~\eqref{eq:partpar2}.
\end{proof}

On the previous proof, one could prove directly that there exists no automorphism $\beta$ of $H$ with $T^\beta=T'$; however, this requires some detailed computations, in the same spirit as the computations in Section~\ref{sec:abla}.

\bibliography{Generalised_Dihedral_01-08-2020}{}
\bibliographystyle{amsplain}
\end{document}